\providecommand\@dotsep{5}
\def\listtodoname{List of Todos}
\def\listoftodos{\@starttoc{tdo}\listtodoname}
\numberwithin{equation}{section}
\def\dis{\displaystyle}
\newtheorem{lemma}{Lemma}
\newtheorem{theorem}{Theorem}
\newtheorem*{theorem*}{Theorem}
\newtheorem{remark}{Remark}
\title[A quasilinear Schr\"odinger-P0isson system]
{Existence and asymptotic behaviour of solutions\\ for a  quasi-linear Schr\"odinger-Poisson system \\
under a critical nonlinearity}
\author[G. M. Figueiredo]{Giovany M. Figueiredo}
\author[G. Siciliano]{Gaetano Siciliano}
\address[G. M. Figueiredo]{\newline\indent Universidade de Bras\'ilia-UNB
\newline\indent 
Departamento de Matem\'atica
\newline\indent
CEP: 70910-900, Bras\'ilia, DF, Brazil}
\email{\href{mailto:giovany@unb.br}{giovany@unb.br}}
\address[G. Siciliano]{\newline\indent Departamento de Matem\'atica
\newline\indent 
 Universidade de S\~ao Paulo 
\newline\indent 
Rua do Mat\~ao 1010,  05508-090 S\~ao Paulo, SP, Brazil }
\email{\href{mailto:sicilian@ime.usp.br}{sicilian@ime.usp.br}}
\thanks{The authors are partially supported by
CNPq, Capes,  FAPDF and Fapesp, Brazil.}
\subjclass[2010]{
35Q60,  	%PDEs in connection with optics and electromagnetic theory
35J10,  	%Schrödinger operator
35J50,  	%Variational methods for elliptic systems
35J92,  	%Quasilinear elliptic equations with $p$-Laplacian
35J60,  	%Nonlinear elliptic equations
35J61.  	%Semilinear elliptic equations
%35J62  	%Quasilinear elliptic equations
%35J25, 34B18,  34K12 , 34C11, 45M20, % Positive solutions
}
\keywords{Variational methods, nonlocal problems,
Schr\"odinger-Poisson equation, critical growth.}
\begin{document}

\maketitle

\begin{abstract}
In this paper  we consider 
the following quasilinear Schr\"odinger-Poisson system  %in $\mathbb{R}^{3}$:
$$
\left\{
\begin{array}[c]{ll}
- \Delta u +u+\phi u = \lambda f(x,u)+|u|^{2^{*}-2}u &\ \mbox{in }  \mathbb{R}^{3} \\
-\Delta \phi -\varepsilon^{4} \Delta_4 \phi = u^{2} &   \ \mbox{in } 
\mathbb{R}^{3},
\end{array}
 \right.
$$
depending on the two parameters $\lambda,\varepsilon>0$.

We first prove that, for  
$\lambda$  larger then a certain $\lambda^{*}>0$, there exists a solution
for every $\varepsilon>0$. Later, we study the asymptotic behaviour of these solutions
whenever $\varepsilon$ tends to zero, and we prove that they converge to
the solution of the Schr\"odinger-Poisson system associated.
%Moreover, fixed a $\overline{\lambda}\ge\lambda^{*}$, we study also 
%We show also
%the asymptotic behaviour of this solution when $\lambda$
%tends to infinity. Our approach is based on variational method,
%an appropriated truncated argument and a priori estimates to obtain
%solution.
\end{abstract}

\maketitle

\section{Introduction}

In \cite{BK,IKL} 
%a recent series of papers \cite{BK, IKL,.....,...} 
Kavian, Benmlih, Illner and Lange have attracted the attention on a 
new kind of elliptic system which, to the best of our knowledge, was never been considered before
in the mathematical literature, although the problem was  known among the physicists.
It seems it has been named {\sl quasi-linear Schr\"odinger-Poisson system}
and indeed is a generalization of the well-known Schr\"odinger-Poisson system.

This new system appears by studying a
quantum mechanical model of extremely small devices in semi-conductor nanostructures
taking into account quantum structure and   the longitudinal field oscillations during the beam propagation.
Indeed 
the intensity-dependent dielectric permittivity
 has the  form
 \begin{equation*}\label{eq:dielectr}
 {c}_{\textrm{diel}}(\nabla \phi)= 1+\varepsilon^{4}|\nabla \phi|^{2}, \quad \varepsilon>0 \ \textrm{ and constant}
 \end{equation*} 
 that is, it depends on the field itself.
We are considering the simplified case
of constant coefficient in $c_{\textrm{diel}}$ which corresponds to homogeneous medium.
% The total density of charge allowed is $|\psi|^{2}$ which arises from the Schr\"odinger wave function 
 %(for simplicity we only consider the one state case) 
 %To be honest the consider 
 %$ {c}_{\textrm{diel}}(\nabla \phi)= \varepsilon_{1}+\varepsilon_{2}|\nabla \phi|^{2}, \varepsilon_{i}>0$.
 Here  $\phi$ is the electric field and  $\varepsilon$ appears to the power $4$ just for convenience.
 It seems this physical model and the corresponding equation of propagation
has appeared for the first time
in \cite{AA} where the authors proposed and discussed this new model 
(see also \cite{MRS}).
%in which appear an additional term
%(being $\varepsilon>0$)
%in the usual equation describing the beam propagation. This new term
%...

The main novelty with respect to the huge existing literature describing
beam propagation whenever $c_{\textrm{diel}}$ does not depend on the field, 
is that, from a mathematical point of view,  the equation of the electrostatic 
% which is  a consequence of the form of the dielectric
%constant $c_{\textrm{diel}}$ above, is that the equation of the electrostatic 
potential is not
linear, that is it is not the usual Poisson equation (or Gauss law in physical terms) given by $-\Delta\phi = u^{2}$.
Without entering in physical details here, the system one arrives by looking for standing waves
solutions is something of type
%%%%%(the interested reader may consult \cite{AA, MRS}), in  the papers \cite{BK, IKL} the authors
%%%%%% introduced for the first time in the mathematical literature, and studied,  the following elliptic system
%%%%% in $\mathbb R^{3}$,
\begin{equation}\label{eq:Kavian}%\tag{$P_{\lambda}$}
\left\{
\begin{array}[c]{ll}
-\Delta u + \omega u+(\phi +\widetilde\phi )u = 0 & \medskip\\
 -\Delta \phi - \varepsilon^{4}\Delta_4 \phi =u^{2}-n^{*}&
\end{array}
 \right.
\end{equation}
where $u,\phi$ are the unknown functions (here $u$ represents the modulus of the wave function
and $\phi$ the electrostatic potential) and  $n^{*},\widetilde \phi :\mathbb R^{3}\to \mathbb R$ are given
data of the problem: they represent, respectively, the dopant density and  the effective external potential.
The operator $\Delta_{4}$ is the $4-$Laplacian, defined as $\Delta_4 u: = \mbox{div}(|\nabla u|^{2}\nabla u)$.
Indeed this is exactly the system introduced in the mathematical literature, as we said before, in the papers 
\cite{BK, IKL}.

Under minimal summability conditions on the data $n^{*}$ and $\widetilde\phi$
the authors in \cite{BK}, by means of minimization techniques,   proves the existence of ground state solutions and study its behaviour whenever $\varepsilon\to 0^{+}.$ Indeed they converges to the ground state solution
of the Schr\"odinger-Poisson system associated (that is, whenever $\varepsilon=0$ in \eqref{eq:Kavian}).

 A similar problem with periodicity conditions  is studied in \cite{IKL} where 
the existence of infinitely many  solutions normalized in $L^{2}$ by means of the Krasnoselkii genus
is proved.

Observe that the Schr\"odinger equation in the above system (i.e. the first equation) is linear in $u$.

\medskip

%To the best of our knowledge there are few other papers (see \cite{DLMZ,ILTZ,LY,PL}) concerning this kind of equations
%describing the interaction of the matter filed given by the  Schr\"odinger wave function
%with the electrostatic field described by the second order approximation of the   Born-Infeld Lagrangian density.

We point out that few other papers are known to treat this type of systems:  we revise now them here.

In the recent paper \cite{DLMZ},
Ding, Li, Meng and Zhuang
  deal with an asymptotically linear nonlinearity in the Schr\"odinger equation
and study the existence and the  behaviour of the ground state solution as  $\varepsilon\to 0^{+}$.
Again the solutions converge to the solution of the ``limit'' problem with $\varepsilon=0$.

 Illner, Lange, Toomire and Zweifel in \cite{ILTZ}  consider the quasilinear Schr\"odinger-Poisson system in the unitary cube under periodic 
boundary conditions and by using  Galerkin scheme, they prove global existence and uniqueness
of solutions.

In \cite{LY} Li and Yang study
the existence and uniqueness of a global mild solution to the initial boundary value problem in 
the one dimensional case

%As we will see, from a variational point of view, system \eqref{eq:Kavian} can be obtained by means of  a
%suitable ``interaction" between the Schr\"odinger Lagrangian density and the Lagrangian density 
%of the electromagnetic field according to the Born-Infeld theory, and not the Maxwell theory.

Finally  in the paper
\cite{PL}  of d'Avenia and  Pisani,
the Born-Infeld Lagrangian density
interacting with the Klein-Gordon equation is considered
They find infinitely many radial solutions in the subcritical case 
via the Symmetric Mountain Pass Theorem.
We cite this paper because the use of the Born-Infeld Lagrangian density for the electromagnetic field
(in place of the classical Maxwell Lagrangian density)
gives rise to the quasilinear equation for the electrostatic field. Indeed this will be our
approach to derive the system in the next Section.

\bigskip

%This can be sees also as  a consequence of the fact that a different (actually a better) Lagrangian of the electromagnetic field
%is considered in such a way that the classical Maxwell Lagrangian is a first approximation of this new one.
%This gives rise to the so called quasilinear self-consistent
%Schr\"odinger-Poisson system, indeed the Poisson equation is a quasilinear one, see below.
%

It is clear that in theoretical analysis, numerical studies 
the most frequently used model for beam propagation assumes $c_\textrm{diel}(\nabla\phi) = 1$
which gives rise to the Poisson equation  $-\Delta \phi=u^{2}$ in the system.
% and indeed the resulting 
%elliptic system is known as the Schr\"odinger-Poisson system.
%where the second equation is exactly the genuine Poisson equation 
%that is whenever $\varepsilon=0$ in \eqref{eq:Kavian}.
 The advantage of working with the Poisson equation is that the solution is explicitly 
 given by the convolution $\phi^{\textrm {Poiss}}(u) = |\cdot|^{-1}*u^{2}$ (up to a multiplicative factor) so that many
 good properties of the solution are known;
 in particular the homogeneity
$\phi^{\textrm{Poiss}}(t u) = t^{2} \phi^{\textrm{Poiss}}(u), t\in \mathbb R.$
For the Schr\"odinger-Poisson system, 
 the existing literature is so huge that is almost impossible to give a satisfactory
list of papers. As a matter of fact, the main difficult dealing with the quasilinear Poisson equation
of type
$$-\Delta \phi-\Delta_{4}\phi = u^{2}$$
is due exactly to the lack of good properties for  the solution.

\bigskip

Coming back to the present paper, our aim is to study a system similar to 
\eqref{eq:Kavian} where the Schr\"odinger equation has a   critical nonlinearity;
more specifically, we are concerning here with the following system
\begin{equation}\label{eq:P}\tag{$P_{\lambda,\varepsilon}$}
\left\{
\begin{array}[c]{ll}
-\Delta u + u+\phi u = \lambda f(x,u)+|u|^{2^{*}-2}u & \ \mbox{in} \ \
\mathbb{R}^{3}, \medskip\\
 -\Delta \phi - \varepsilon^{4}\Delta_4 \phi =u^{2}& \  \mbox{in} \ \ \mathbb{R}^{3},
\end{array}
 \right.
\end{equation}

\noindent where 
\begin{itemize}
\item $\lambda>0$ and $\varepsilon>0$ are   parameters,
\item  $2^{*}=6$ is the critical Sobolev exponent in dimension 3,
\item $f:\mathbb{R}^{3}\times\mathbb{R} \rightarrow\mathbb{R}$ is a continuous
function that satisfies the following assumptions \medskip
\end{itemize}

% some conditions which will be stated later on.
%Before stating our main result, we need the following hypotheses on
%the function $f:\mathbb{R}^{3}\times \mathbb{R}\rightarrow \mathbb{R}$ :
\begin{enumerate}[label=(f\arabic*),ref=f\arabic*,start=0]
\item\label{f_{0}} $f(x,t)=0$ for $t\leq0$, \medskip
\item \label{f_{1}}$ \lim_{t \rightarrow 0}\dis \frac{f(x,t)}{t}=0,$ uniformly on $x\in \mathbb R^{3}$, \medskip
\item\label{f_{2}}there exists $q \in (2, 2^{*})$ verifying
$\dis\lim_{t \rightarrow +\infty}\frac{f(x,t)}{t^{q-1}}=0$ uniformly on $x\in\mathbb R^{3}$, \medskip 

\item\label{f_{3}} there exists $\theta \in(4,2^{*})$ such that
$$
0<\theta F(x,t)=\theta\int^{t}_{0}f(x,s)ds \leq tf(x,t), \quad
\mbox{for all}\,\,\, x\in \mathbb R^{3} \ \mbox{and} \ \ t>0.
$$
\end{enumerate}
A typical example of  function satisfying the above conditions is
$$
f(x,t)= \dis\sum^{k}_{i=1}C_{i}(x)t_{+}^{q_{i}-1}
$$
with $k \in \mathbb{N}$, $4<q_{i}<2^{*}$, $C_{i}$ bounded and positive functions %\in L^{\infty}(\mathbb R^{3})$, $C_{i}(x)>0$
and $t_{+}=\dis\max\{t,0\}$.

\bigskip

Before introducing the notion of solution, we establish few basic standard notations.

For $p\in[1,+\infty], L^{p}(\mathbb R^{3})$ is the usual Lebesgue space
with norm $|u|_{p}$.

We denote with $H^{1}(\mathbb R^{3})$ the usual Sobolev space endowed with scalar product 
and norm given by
$$\langle u, v\rangle_{H^{1}}:=\int_{\mathbb R^{3}} \nabla u\nabla v +\int_{\mathbb R^{3}} uv,\qquad \|u\|_{H^{1}}:= \langle u, u\rangle^{1/2}.$$

For $p\geq2, D^{1,p}(\mathbb R^{3})$ is the Banach space defined as the completion of the test functions $C^{\infty}_{c}(\mathbb R^{3})$
with respect to the  $L^{p}-$ norm of the gradient. We define
$$X:=D^{1,2}(\mathbb R^{3})\cap D^{1,4}(\mathbb R^{3})$$
which  is a Banach space under the norm
$$\|\phi\|_{X}:=|\nabla \phi|_{2} + |\nabla \phi|_{4}.$$

As a final convention, whenever we are understanding the Lebesgue measure $dx$ in integrals, it will be always omitted;
otherwise we will write explicitly the measure.

\bigskip

The natural functional spaces in which find the solutions of \eqref{eq:P} are:
$$
u\in H^{1}(\mathbb R^{3}), \quad
\phi \in X.%:=D^{1,2}(\mathbb R^{3})\cap D^{1,4}(\mathbb R^{3})
$$
By a solution of \eqref{eq:P} we mean a pair 
$(u_{\lambda,\varepsilon}, \phi_{\lambda,\varepsilon})\in H^{1}(\mathbb R^{3})\times X$ such that
\begin{equation}\label{eq:ws1}
\forall v\in H^{1}(\mathbb R^{3}): \quad \int_{\mathbb R^{3}} \nabla u_{\lambda,\varepsilon} \nabla v+\int_{\mathbb R^{3}} u_{\lambda,\varepsilon} v +\int_{\mathbb R^{3}} \phi_{\lambda,\varepsilon} u_{\lambda,\varepsilon}v
=\lambda \int_{\mathbb R^{3}} f(x,u_{\lambda,\varepsilon}) v+\int_{\mathbb R^{3}} |u_{\lambda,\varepsilon}|^{2^{*}-2}u_{\lambda,\varepsilon}v\\
\end{equation}
\begin{equation}\label{eq:ws2}
\forall \xi \in X:\quad  \int_{\mathbb R^{3}} \nabla \phi_{\lambda,\varepsilon} \nabla \xi
+\varepsilon^{4}\int_{\mathbb R^{3}} |\nabla \phi_{\lambda,\varepsilon}|^{2}\nabla  \phi_{\lambda,\varepsilon}\nabla \xi = \int_{\mathbb R^{3}} \xi u^{2} .
\end{equation}

\medskip

The main results of this paper are the following.
\begin{theorem} \label{teorema1}
Assume that conditions \eqref{f_{0}}-\eqref{f_{3}} hold.
Then, there exists $\lambda^{*}>0$, such that
$$\forall \lambda\geq\lambda^{*}, \varepsilon>0: \textrm{problem \eqref{eq:P} admit a solution 
$(u_{\lambda,\varepsilon}, \phi_{\lambda,\varepsilon})\in H^{1}(\mathbb R^{3})\times X$}.$$
Moreover  $\phi_{\lambda,\varepsilon}, u_{\lambda,\varepsilon}$ are nonnegative, of Mountain
Pass type
%\lambda^{*}$. Moreover,  if $(u_{\lambda,\varepsilon},\phi_{\lambda,\varepsilon})$ is such a solution, then
and for every fixed $\varepsilon>0$:
\begin{itemize}
\item[1.] $\lim_{\lambda\rightarrow + \infty} \|u_{\lambda,\varepsilon}\|_{H^{1}} = 0$,
\item[2.] $\lim_{\lambda\rightarrow + \infty} \|\phi_{\lambda,\varepsilon}\|_{X} =0$,
\item[3.] $\lim_{\lambda\rightarrow + \infty} |\phi_{\lambda,\varepsilon}|_{\infty}=0$.
\end{itemize}
%	
%\begin{eqnarray*}\label{eq:}
%&&%\forall\varepsilon\geq0: \
%\lim_{\lambda\rightarrow + \infty} \|u_{\lambda,\varepsilon}\|_{H^{1}} = 0,
% %\left( \int_{\mathbb R^{3}} |\nabla u_{\lambda,\varepsilon} |^{2}+ \int_{\mathbb R^{3}}u_{\lambda,\varepsilon^{2}\right)= 0 
%  \\
%&&
%%\forall\varepsilon\geq0: \ 
%\lim_{\lambda\rightarrow + \infty} \|\phi_{\lambda,\varepsilon}\|_{X} =\lim_{\lambda\rightarrow + \infty} |\phi_{\lambda,\varepsilon}|_{\infty}= 0.
%%\left( \int_{\mathbb R^{3}}| \nabla \phi_{\lambda,\varepsilon} |^{2} +\int_{\mathbb R^{3}} |\nabla \phi_{\lambda,\varepsilon}|%^{4}\right)=0\, 
%%\quad \text{ and } \  \lim_{\lambda\to+\infty} \phi_{\lambda,\varepsilon}=0 \ \text{in } L^{\infty}(\mathbb R^{3}).
%\end{eqnarray*}
\end{theorem}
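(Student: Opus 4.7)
The plan is to reduce \eqref{eq:P} to a single equation in $u$ by solving the quasilinear Poisson equation for $\phi$ in terms of $u$, and then to apply a critical-exponent Mountain Pass Theorem to the resulting reduced functional. For fixed $\varepsilon>0$ and $u\in H^1(\mathbb R^3)$, the operator $\phi\mapsto -\Delta\phi-\varepsilon^4\Delta_4\phi$ is strictly monotone, coercive and hemicontinuous on the reflexive Banach space $X$, while $u^2\in L^{6/5}(\mathbb R^3)\hookrightarrow X^*$. By Browder--Minty, the second equation in \eqref{eq:P} admits a unique weak solution $\phi_{\varepsilon,u}\in X$; standard arguments give $\phi_{\varepsilon,u}\geq 0$ and that $u\mapsto\phi_{\varepsilon,u}$ is of class $C^1$ from $H^1(\mathbb R^3)$ into $X$. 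Substitution then produces a $C^1$ functional $I_{\lambda,\varepsilon}:H^1(\mathbb R^3)\to\mathbb R$ whose critical points are exactly the $u$-components of solutions of \eqref{eq:P}.

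The mountain pass geometry of $I_{\lambda,\varepsilon}$ is straightforward: \eqref{f_{1}}--\eqref{f_{2}} together with the critical Sobolev embedding give $I_{\lambda,\varepsilon}(u)\geq\alpha>0$ on some small sphere, while \eqref{f_{3}} with $\theta>4$ dominates both the quadratic and the quartic nonlocal contributions of $\phi_{\varepsilon,u}$ and yields $I_{\lambda,\varepsilon}(tu_0)\to-\infty$ as $t\to+\infty$ for a suitable $u_0$. The crucial step is to push the minimax level $c_{\lambda,\varepsilon}$ below the compactness threshold $\tfrac{1}{3}S^{3/2}$ imposed by the critical exponent, where $S$ is the best constant of $D^{1,2}(\mathbb R^{3})\hookrightarrow L^{2^{*}}(\mathbb R^{3})$. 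Fixing a nonnegative $u_0$ with $\int F(x,u_0)>0$ and analysing $\max_{t\geq 0}I_{\lambda,\varepsilon}(tu_0)$, the maximum is attained at some $t^{*}(\lambda)\to 0^{+}$ as $\lambda\to\infty$; since $\int\phi_{\varepsilon,u}u^2$ is nonincreasing in $\varepsilon$ (comparison with the classical Poisson case), one can choose $\lambda^{*}$ \emph{independently} of $\varepsilon>0$ so that $c_{\lambda,\varepsilon}<\tfrac{1}{3}S^{3/2}$ for every $\lambda\geq\lambda^{*}$ and every $\varepsilon>0$.

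The Mountain Pass Theorem then delivers a Palais--Smale sequence $\{u_n\}$ at level $c_{\lambda,\varepsilon}$; \eqref{f_{3}} makes it bounded in $H^1(\mathbb R^3)$. A Lions-type concentration--compactness analysis, combined with the continuity of $u\mapsto\phi_{\varepsilon,u}$ and a Brezis--Lieb splitting of the critical term, excludes both vanishing and concentration below $\tfrac{1}{3}S^{3/2}$ and gives strong convergence to a nontrivial nonnegative critical point $u_{\lambda,\varepsilon}$, with associated $\phi_{\lambda,\varepsilon}=\phi_{\varepsilon,u_{\lambda,\varepsilon}}\geq 0$. For the asymptotic statements, the already-established bound $c_{\lambda,\varepsilon}\to 0$ combined with the energy identity and \eqref{f_{3}} forces $\|u_{\lambda,\varepsilon}\|_{H^1}\to 0$, whence by continuity $\|\phi_{\lambda,\varepsilon}\|_{X}\to 0$; a Moser/De~Giorgi iteration on the second equation of \eqref{eq:P} finally yields the $L^\infty$ estimate in item~3.

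The hard part will be the interaction between the critical exponent and the quasilinear Poisson equation. Unlike the classical Schr\"odinger--Poisson case, $\phi_{\varepsilon,u}$ admits no explicit representation and is not $2$-homogeneous in $u$, so both the rigorous derivation of the reduced functional $I_{\lambda,\varepsilon}$ and the Brezis--Lieb-type splitting for the nonlocal terms must be carried out by hand, and one has to ensure that the $\varepsilon^4\int|\nabla\phi_{\varepsilon,u}|^4$ contribution does not spoil any of the estimates as $\varepsilon$ varies. The assumption $\theta>4$ in \eqref{f_{3}} is exactly what is needed to dominate this quartic term and to make the Ambrosetti--Rabinowitz scheme work uniformly in $\varepsilon>0$.
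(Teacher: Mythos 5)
Your proposal reaches the same milestones as the paper (Browder--Minty reduction and $C^{1}$ solution map, mountain pass level driven to $0$ as $\lambda\to\infty$ uniformly in $\varepsilon$, concentration--compactness for the critical term, vanishing asymptotics), but by a genuinely different route in the central technical step: you work with the untruncated reduced functional throughout, whereas the paper's key device (Section \ref{sec:trunc}, borrowed from Kikuchi) is the cut-off $h_{T}(u)=\psi(\|u\|_{H^{1}}^{2}/T^{2})$ placed in front of the nonlocal term $I_{\varepsilon}(u)$. The truncation buys the paper uniformity for free: the endpoint of the mountain pass path lies in the region where $h_{T}\equiv 0$, so the geometry and the estimate $\sup_{\varepsilon\geq 0}c^{T}_{\lambda,\varepsilon}\to 0$ are independent of $\varepsilon$, and Palais--Smale sequences at small levels satisfy $\|u_{n}\|_{H^{1}}\leq T$, hence are Palais--Smale sequences for the original $J_{\lambda,\varepsilon}$ --- all without ever knowing how $I_{\varepsilon}(u)$ grows in $u$. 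Your route must instead supply a quantitative, $\varepsilon$-uniform growth bound on the nonlocal term. Such a bound does exist, so your scheme can be closed: from \eqref{eq:sostituicao} and Sobolev one gets $\int_{\mathbb R^{3}}\phi_{\varepsilon}(u)u^{2}\leq S^{-1}|u|_{12/5}^{4}$ and $I_{\varepsilon}(u)\leq \frac{3}{8}\int_{\mathbb R^{3}}\phi_{\varepsilon}(u)u^{2}\leq C\|u\|_{H^{1}}^{4}$ uniformly in $\varepsilon\geq 0$; moreover, with $\theta>4$ in \eqref{f_{3}}, the combination $J_{\lambda,\varepsilon}-\frac{1}{\theta}J_{\lambda,\varepsilon}'[\cdot]$ gives both nonlocal integrals nonnegative coefficients ($\frac14-\frac1\theta$ and $\frac38-\frac1\theta$), so Palais--Smale sequences are bounded with no truncation at all. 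In this sense your approach is more economical, at the price of proving the growth bound by hand.

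Two assertions in your write-up need repair, though both are fixable. First, you justify the $\varepsilon$-uniformity by claiming $\int\phi_{\varepsilon}(u)u^{2}$ is ``nonincreasing in $\varepsilon$ (comparison with the classical Poisson case)''; a pointwise comparison principle for $-\Delta-\varepsilon^{4}\Delta_{4}$ is not standard and should not be invoked as such. What is true, and sufficient, is the comparison with $\varepsilon=0$: since $\phi_{\varepsilon}(u)$ minimizes $E_{\varepsilon}(\phi)=\frac12\int|\nabla\phi|^{2}+\frac{\varepsilon^{4}}{4}\int|\nabla\phi|^{4}-\int\phi u^{2}$ and $E_{\varepsilon}\geq E_{0}$ pointwise, comparing minimum values and using \eqref{eq:sostituicao} yields $\int\phi_{\varepsilon}(u)u^{2}\leq\int\phi_{0}(u)u^{2}$; or one can bypass any comparison via the Sobolev estimate above. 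Second, your compactness threshold $\frac13 S^{3/2}$ is the wrong constant for this problem: $\frac13$ arises from $J-\frac12 J'$, but that combination gives the nonlocal terms a \emph{negative} coefficient ($\frac14-\frac12<0$), so one is forced to use $J-\frac1\theta J'$, and then a concentration point only produces the contradiction $c_{\lambda,\varepsilon}\geq\frac{2^{*}-\theta}{2^{*}\theta}S^{3/2}$, exactly the (smaller) threshold the paper uses in \eqref{ref4}. As literally stated, your exclusion-of-concentration step would fail; it is rescued only because your own level estimate sends $c_{\lambda,\varepsilon}\to 0$ uniformly in $\varepsilon$, so the level can be pushed below any fixed positive threshold. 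Finally, for item 3 you propose a Moser iteration on the second equation, whereas the paper simply invokes the continuous embedding $X\hookrightarrow L^{\infty}(\mathbb R^{3})$ of Fortunato--Orsina--Pisani, so that $|\phi_{\lambda,\varepsilon}|_{\infty}\leq C\|\phi_{\lambda,\varepsilon}\|_{X}\to 0$ comes at no extra cost; your route is workable but heavier.
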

%\textcolor{red}{ Podemos dizer mais, ou seja que os limites sao  uniforme em $\varepsilon?$
%(Mas isso nao afeta nada)}
Actually, except for the limit in $3.$, the Theorem  also holds for $\varepsilon=0$
by replacing $X$ with $D^{1,2}(\mathbb R^{3})$.
%The meaning of Mountain Pass type solution will be clear later on. 

%\begin{remark}
%Actually we will prove that
%for every $T>0$ there exists $\lambda^{*}=\lambda^{*}(T)>0$ such that 
%for every $\lambda>\lambda^{*}$
%problem
%\eqref{eq:Plambda} has a solution $u_{\lambda}^{T}\in H^{1}(\mathbb R^{3})$
%with $\|u_{\lambda}^{T}\|\leq T$.
%\end{remark}
We study also the behaviour with respect to $\varepsilon$ of the solutions 
given in Theorem \ref{teorema1},
indeed we prove they converge to the solution of the Schr\"odinger-Poisson 
system.
\begin{theorem}\label{teorema2}
	Assume that conditions \eqref{f_{0}}-\eqref{f_{3}} hold. Let $\lambda^{*}>0$
	be the one given in Theorem \ref{teorema1}  and 
	$\overline\lambda\geq\lambda^{*}$ be fixed.
	%where $\lambda^{*}$ is  given in Theorem \ref{teorema1} and let 
	Let $\{(u_{\overline\lambda,\varepsilon}
	, \phi_{\overline\lambda,\varepsilon})\}_{\varepsilon>0}$ be the solutions given above
	%above n Theorem \ref{teorema1}
	in correspondence of such fixed $\overline \lambda$. %and $\varepsilon>0$. 
	 Then
	%\textcolor{blue}{along a sequence $\{\varepsilon_{n}\}$ which tends to zero,}
	\begin{itemize}
		\item[1.] $\lim_{\varepsilon\to0^{+}} u_{\overline\lambda,\varepsilon} = u_{\overline\lambda,0}$ in $ H^{1}(\mathbb R^{3})$,
		\item[2.]  $\lim_{\varepsilon\to0^{+}} \phi_{\overline\lambda,\varepsilon} = \phi_{\overline\lambda,0}$ in $ D^{1,2}(\mathbb R^{3})$,
	\end{itemize}
	%where $(u_{\lambda},\phi_{\lambda})$ is a nontrivial solution of $(P_{\lambda,0})$.
	where $(u_{\overline\lambda,0}, \phi_{\overline\lambda,0})\in H^{1}(\mathbb R^{3})\times D^{1,2}(\mathbb R^{3})$ is a  positive solution, of Mountain Pass type of the Schr\"odinger-Poisson system
	\begin{equation}\label{eq:SP}
	\left\{
	\begin{array}[c]{ll}
	-\Delta u + u+\phi u = \overline\lambda f(x,u)+|u|^{2^{*}-2}u & \ \mbox{in} \ \
	\mathbb{R}^{3}, \medskip\\
	-\Delta \phi = u^{2}& \  \mbox{in} \ \ \mathbb{R}^{3}.
	\end{array}
	\right.
	\end{equation}
\end{theorem}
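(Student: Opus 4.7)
The plan is to obtain uniform a priori bounds on $(u_\varepsilon,\phi_\varepsilon):=(u_{\overline\lambda,\varepsilon},\phi_{\overline\lambda,\varepsilon})$, extract weak limits, pass to the limit in both equations of \eqref{eq:P}, and finally upgrade weak to strong convergence using the subcritical character of the Mountain Pass level.

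\textbf{Uniform bounds and passage to the limit.} First I would show that the Mountain Pass levels $c_\varepsilon$ are uniformly bounded for $\varepsilon\in(0,1]$: after reducing to a single-variable functional via the map $u\mapsto\phi_{u,\varepsilon}$, the quasilinear contribution $\varepsilon^4|\nabla\phi|_4^4$ makes the energy monotone increasing in $\varepsilon$, so testing with a single $\varepsilon$-independent mountain pass path yields $c_\varepsilon\leq C$. Combined with (f3) and the argument of Theorem \ref{teorema1} this gives $\|u_\varepsilon\|_{H^1}\leq C$. Testing \eqref{eq:ws2} against $\phi_\varepsilon$ and using $D^{1,2}\hookrightarrow L^6$,
$$|\nabla\phi_\varepsilon|_2^2+\varepsilon^4|\nabla\phi_\varepsilon|_4^4=\int_{\mathbb R^3}\phi_\varepsilon u_\varepsilon^2\leq C|\nabla\phi_\varepsilon|_2,$$
so $|\nabla\phi_\varepsilon|_2\leq C$ and $\varepsilon|\nabla\phi_\varepsilon|_4\leq C$. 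Up to subsequences, $u_\varepsilon\rightharpoonup u_0$ in $H^1$, $\phi_\varepsilon\rightharpoonup\phi_0$ in $D^{1,2}$, strongly in $L^p_{\mathrm{loc}}$ for $p\in[1,2^*)$, and a.e.. For any $\xi\in C^\infty_c(\mathbb R^3)$, the quasilinear term disappears in the limit since
$$\left|\varepsilon^4\int|\nabla\phi_\varepsilon|^2\nabla\phi_\varepsilon\cdot\nabla\xi\right|\leq\varepsilon\bigl(\varepsilon|\nabla\phi_\varepsilon|_4\bigr)^3|\nabla\xi|_4\to 0,$$
so the Poisson equation reduces to $-\Delta\phi_0=u_0^2$. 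In the Schrödinger equation, the subcritical nonlinearity passes by (f1)--(f2), the nonlocal term $\int\phi_\varepsilon u_\varepsilon v\to\int\phi_0 u_0 v$ by weak-strong pairing in $L^6\cdot L^{12/5}\cdot L^{12/5}$, and the critical term by a.e.~convergence on compact supports. Density shows $(u_0,\phi_0)$ solves \eqref{eq:SP} weakly.

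\textbf{Strong convergence.} This is the main obstacle because of the critical exponent. For $\phi$, I would test the difference of the two Poisson equations with $\phi_\varepsilon-\phi_0$: monotonicity of $-\Delta_4$, strong convergence of $u_\varepsilon^2$ to $u_0^2$ in $L^{6/5}$ (via Brezis-Lieb and $H^1\hookrightarrow L^{12/5}$), and the vanishing of the quasilinear correction force $|\nabla(\phi_\varepsilon-\phi_0)|_2\to 0$. For $u$, setting $w_\varepsilon:=u_\varepsilon-u_0$, a Brezis-Lieb splitting of the critical term combined with the fact that $(u_0,\phi_0)$ already solves \eqref{eq:SP} presents $w_\varepsilon$ as an asymptotic Palais-Smale sequence at level $c_\varepsilon-c_{\overline\lambda,0}\geq 0$ for the limit functional. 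The crucial ingredient, already exploited in Theorem \ref{teorema1}, is the sub-threshold bound $c_\varepsilon<\frac{1}{3}S^{3/2}$ with $S$ the best Sobolev constant, which rules out concentration phenomena and forces $w_\varepsilon\to 0$ in $H^1(\mathbb R^3)$.

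\textbf{Properties of the limit.} Nonnegativity of $u_0$ is inherited from a.e.~convergence; nontriviality from the lower bound $c_{\overline\lambda,0}>0$ on the Mountain Pass level; strict positivity from the maximum principle applied to the limit equation; and the Mountain Pass nature of $(u_0,\phi_0)$ follows from the continuity $c_\varepsilon\to c_{\overline\lambda,0}$ as $\varepsilon\to 0^+$, which is a byproduct of the strong convergence just established.
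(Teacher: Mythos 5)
Your overall architecture --- uniform sub-threshold bounds on the mountain pass levels, boundedness of $u_\varepsilon$ via \eqref{f_{3}}, weak limits, vanishing of the quasilinear correction, compactness below the critical threshold --- matches the paper's, and your passage to the limit in the two equations (including the estimate $\varepsilon^{4}\bigl|\int|\nabla\phi_\varepsilon|^{2}\nabla\phi_\varepsilon\cdot\nabla\xi\bigr|\leq\varepsilon\,(\varepsilon|\nabla\phi_\varepsilon|_{4})^{3}|\nabla\xi|_{4}$) is fine. But the compactness step, which is the heart of the theorem, has genuine gaps. The first is a circularity of order: your proof that $\phi_\varepsilon\to\phi_0$ in $D^{1,2}(\mathbb R^{3})$ uses $u_\varepsilon^{2}\to u_0^{2}$ in $L^{6/5}(\mathbb R^{3})$, which is not available at that stage. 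Brezis--Lieb together with $H^{1}\hookrightarrow L^{12/5}$ gives only a splitting of norms, never an upgrade from weak to strong convergence; global strong $L^{12/5}$ convergence of $u_\varepsilon$ is essentially equivalent to the strong convergence you only establish afterwards. Since your $u$-argument does not use the $\phi$-convergence, the repair is to reverse the order --- which is exactly what the paper does: it first proves $u_{\overline\lambda,\varepsilon}\to u_{\overline\lambda,0}$ in $H^{1}(\mathbb R^{3})$ by rerunning the concentration--compactness argument of Section \ref{sec:final} (legitimate because the bound \eqref{ref4} is uniform in $\varepsilon\geq0$), then deduces $u_{\overline\lambda,\varepsilon}^{2}\to u_{\overline\lambda,0}^{2}$ in $L^{6/5}$, and only then gets the convergence of the potentials from Lemma \ref{lem:kavian} (\cite[Lemma 3.2]{BK}), which is precisely the statement you re-prove by hand via monotonicity.

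The second gap is in the level accounting of your Brezis--Lieb/Palais--Smale splitting for $u$. (i) Writing the residual level as $c_\varepsilon-c_{\overline\lambda,0}$ presupposes $J_{\overline\lambda,0}(u_0)=c_{\overline\lambda,0}$, which is circular: that $u_0$ sits at the mountain pass level is part of what must be proved, and a priori the weak limit is just some critical point of the limit functional (conceivably $0$). The correct residual level is $\lim c_\varepsilon-J_{\overline\lambda,0}(u_0)$, and the argument closes because $J_{\overline\lambda,0}(u_0)\geq0$ (from criticality and \eqref{f_{3}} with $\theta>4$) and $c_\varepsilon<\frac{2^{*}-\theta}{2^{*}\theta}S^{3/2}<\frac{1}{3}S^{3/2}$ uniformly in $\varepsilon$. (ii) Your monotonicity claim is backwards: from the representation $I_\varepsilon(u)=\sup_{\phi}\bigl[\frac12\int\phi u^{2}-\frac14\int|\nabla\phi|^{2}-\frac{\varepsilon^{4}}{8}\int|\nabla\phi|^{4}\bigr]$ the reduced energy is non-increasing in $\varepsilon$, so the natural inequality is $c_\varepsilon\leq c_{\overline\lambda,0}$, not $c_\varepsilon\geq c_{\overline\lambda,0}$; the uniform bound on $c_\varepsilon$ survives (compare with $\varepsilon=0$ instead of $\varepsilon=1$), but the sign you rely on does not. (iii) Any splitting of $J_{\overline\lambda,\varepsilon}$ along $u_\varepsilon=u_0+w_\varepsilon$ must also dispose of the nonlocal term $I_\varepsilon$, which has no explicit representation in the quasilinear case; you never address this, whereas the paper's localized testing against $\psi_{r}u_{n}$ makes the nonlocal contribution vanish as $r\to0$ (see \eqref{eq:serve2}). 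With the order reversed and the splitting either repaired along these lines or replaced by the paper's concentration--compactness argument, your plan would go through.
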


The important point of Theorem \ref{teorema1}
is the vanishing of the solutions whenever $\lambda$
is larger and larger. Moreover, thanks to a Moser iteration scheme,
we get 
$u_{\lambda,\varepsilon},\phi_{\lambda}\in L^{\infty}(\mathbb R^{3})$
 This allow us to treat also the supercritical case, hence a problem of type 
\begin{equation}\label{eq:Plambdap}
\left\{
\begin{array}[c]{ll}
-\Delta u + u+\phi u = \lambda f(x,u)+|u|^{p-2}u & \ \mbox{in} \ \
\mathbb{R}^{3}, p>2^{*},\medskip\\
 -\Delta \phi - \varepsilon^{4}\Delta_4 \phi =u^{2}& \  \mbox{in} \ \ \mathbb{R}^{3},
\end{array}
 \right.
\end{equation}
under the same assumptions on $f$.
More explicitly, as a consequence of Theorem \ref{teorema1} we have the following
\begin{theorem}\label{teorema3}
Theorem \ref{teorema1} and Theorem \ref{teorema2} hold also for problem \eqref{eq:Plambdap}.
\end{theorem}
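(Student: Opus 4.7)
The plan is to reduce the supercritical problem \eqref{eq:Plambdap} to the critical problem \eqref{eq:P} already handled by Theorems \ref{teorema1} and \ref{teorema2}, via a truncation of the supercritical term together with an a priori $L^{\infty}$-bound that becomes arbitrarily small for $\lambda$ large.

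First, I would truncate the nonlinearity. Fix $M>0$ (to be chosen later) and define
\[
g_{M}(t):=
\begin{cases}
0 & t\leq 0,\\
t^{p-1} & 0<t\leq M,\\
M^{p-2^{*}}\,t^{2^{*}-1} & t\geq M,
\end{cases}
\]
so that $g_{M}$ is continuous, agrees with $t^{p-1}$ on $[0,M]$ and has critical growth at infinity. Absorbing $g_{M}$ into the nonlinear source, the auxiliary problem
\[
\left\{
\begin{array}[c]{ll}
-\Delta u + u+\phi u = \lambda f(x,u)+g_{M}(u) & \mbox{in }\mathbb R^{3},\medskip\\
-\Delta \phi - \varepsilon^{4}\Delta_{4}\phi = u^{2} & \mbox{in }\mathbb R^{3},
\end{array}
\right.
\]
fits the variational framework of Theorems \ref{teorema1}-\ref{teorema2} (the perturbation $g_{M}$ is of critical type at infinity, and one easily checks that $f_{M}(x,u):=f(x,u)+\lambda^{-1}g_{M}(u)$ still satisfies \eqref{f_{0}}-\eqref{f_{3}} with the same $\theta\in(4,2^{*})$, up to slightly enlarging $\lambda^{*}$). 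Hence there exists $\lambda_{M}^{*}>0$ such that, for every $\lambda\geq\lambda_{M}^{*}$ and every $\varepsilon>0$, this auxiliary problem admits a nonnegative Mountain Pass solution $(u_{\lambda,\varepsilon},\phi_{\lambda,\varepsilon})$ satisfying
\[
\lim_{\lambda\to+\infty}\|u_{\lambda,\varepsilon}\|_{H^{1}}=0,\qquad
\lim_{\lambda\to+\infty}\|\phi_{\lambda,\varepsilon}\|_{X}=0,\qquad
\lim_{\lambda\to+\infty}|\phi_{\lambda,\varepsilon}|_{\infty}=0.
\]

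Next, I would run a Moser iteration scheme on the Schr\"odinger equation of the auxiliary system. Since $g_{M}$ has critical growth at infinity and $f$ is subcritical, the classical bootstrap (test the equation with $u\,\min\{u^{2\beta},L\}^{2}$ for growing $\beta$, use the Sobolev embedding and then let $L\to\infty$) yields a constant $C=C(p,q,\theta)>0$, independent of $\lambda$, $\varepsilon$ and $M$, such that
\[
|u_{\lambda,\varepsilon}|_{\infty}\leq C\bigl(1+|u_{\lambda,\varepsilon}|_{2^{*}}\bigr)^{\kappa}\|u_{\lambda,\varepsilon}\|_{H^{1}}
\]
for some $\kappa>0$ depending only on the growth exponents; the term $\phi_{\lambda,\varepsilon}u_{\lambda,\varepsilon}$ is harmless because it has the \emph{good} sign and can be absorbed. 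Combined with the $H^{1}$-vanishing established above, this gives
\[
\lim_{\lambda\to+\infty}|u_{\lambda,\varepsilon}|_{\infty}=0
\]
for every fixed $\varepsilon>0$, and in fact the convergence is uniform in $\varepsilon\in(0,1]$, say, because the $H^{1}$-bound on the Mountain Pass levels (which one extracts already in the proof of Theorem \ref{teorema1}) can be made uniform in $\varepsilon$ on any bounded interval.

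Now I would close the argument. Choose $M=1$ (any fixed value works) and pick $\lambda^{**}\geq\lambda_{1}^{*}$ large enough so that $|u_{\lambda,\varepsilon}|_{\infty}\leq M$ for all $\lambda\geq\lambda^{**}$ and all admissible $\varepsilon$. On the sublevel set $\{u\leq M\}$ one has $g_{M}(u)=u^{p-1}=|u|^{p-2}u$, so $(u_{\lambda,\varepsilon},\phi_{\lambda,\varepsilon})$ solves the original supercritical system \eqref{eq:Plambdap}. The conclusions 1.-3.\ of Theorem \ref{teorema1} are inherited directly from the auxiliary problem, and the asymptotic analysis of Theorem \ref{teorema2} as $\varepsilon\to 0^{+}$ works verbatim because, for any fixed $\overline\lambda\geq\lambda^{**}$, the family $\{u_{\overline\lambda,\varepsilon}\}_{\varepsilon>0}$ stays within the truncation threshold uniformly in $\varepsilon$, so the nonlinearity $g_{M}(u_{\overline\lambda,\varepsilon})=|u_{\overline\lambda,\varepsilon}|^{p-2}u_{\overline\lambda,\varepsilon}$ and the limit problem is precisely \eqref{eq:SP} with the pure power $|u|^{p-2}u$ in place of $|u|^{2^{*}-2}u$.

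The main obstacle I anticipate is the uniformity of the Moser estimate: one needs a quantitative $L^{\infty}$-bound whose dependence on $\|u_{\lambda,\varepsilon}\|_{H^{1}}$ is \emph{linear enough} for the $H^{1}$-vanishing to force $L^{\infty}$-vanishing, and this dependence should not degenerate as $\varepsilon\to 0$. This is handled by iterating the Sobolev inequality with exponents chosen so that the nonlocal term $\phi_{\lambda,\varepsilon}u_{\lambda,\varepsilon}^{2}$ (which is controlled by $|\phi_{\lambda,\varepsilon}|_{\infty}\to 0$ thanks to item 3.\ of Theorem \ref{teorema1}) is absorbed into the left-hand side at each step, yielding the required estimate uniformly in $\varepsilon$ on any interval $(0,\varepsilon_{0}]$.
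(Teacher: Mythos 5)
Your proposal is correct and follows essentially the same route as the paper: truncate the supercritical power at a level $M$ (the paper's $K$) so that it becomes critical at infinity, apply Theorem \ref{teorema1} to the truncated system, and then use a Moser-type estimate $|u_{\lambda,\varepsilon}|_{\infty}\leq C\|u_{\lambda,\varepsilon}\|_{H^{1}}$ with $C$ independent of $\lambda$, $\varepsilon$ and the truncation level (the nonlocal term being harmless by the nonnegativity of $\phi_{\varepsilon}(u)$, exactly as the paper notes), so that the $H^{1}$-vanishing as $\lambda\to+\infty$ forces the solution below the truncation threshold, making it a solution of the original supercritical problem, with the $\varepsilon\to 0^{+}$ asymptotics of Theorem \ref{teorema2} then carrying over verbatim. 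The only inaccuracy is your parenthetical claim that $f_{M}(x,u):=f(x,u)+\lambda^{-1}g_{M}(u)$ still satisfies (f0)--(f3): it cannot, since $g_{M}$ has critical growth at infinity and so (f2) fails; the paper instead keeps the critical part separate, using the domination $|g_{K}(x,t)|\leq\lambda f(x,t)+K^{p-2^{*}}|t|^{2^{*}-2}t$ together with the remark in the Introduction that Theorem \ref{teorema1} tolerates a constant (or bounded) coefficient in front of $|t|^{2^{*}-2}t$ --- a cosmetic repair that does not alter your argument.
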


Our approach in proving Theorem \ref{teorema1} is variational. Indeed a suitable functional
can be defined whose critical points are exactly the solutions of \eqref{eq:P}.
Hence the meaning of ``Mountain Pass solution'' will be clear.

In proving our results,  we have to manage with various difficulties.
Firstly, the fact that the problem is in the whole $\mathbb R^{3}$
and no symmetry conditions on the solutions and on the datum $f$
are imposed (as e.g. in \cite{PL}); even more  we are in the critical case,
then there is a clear lack of compactness. We are able to overcome this difficulty thanks
to   the Concentration Compactness of Lions (see \cite{Lio2}) and taking advantage of the parameter $\lambda$.

Secondly, we have to face with the fact that the solution in the second equation of \eqref{eq:P},
which is quasilinear, has not an explicit formula, neither has homogeneity properties. 
To circumvent this last difficulty, a suitable truncation (already introduced in \cite{K})  is used
in front of the ``bad'' part of the functional.
This type of truncation is also used in \cite{AdP} to treat the classical
Schr\"odinger-Poisson  problem
under a general nonlinearity of Berestycki-Lions type.

\medskip

Our contribution in this paper is then to give a better understanding
of this intriguing problem, especially to see as the truncation argument, which appears for the first time
for a quasilinear Schr\"odinger-Poisson system,
is useful to deal with a piece of the functional which has not good properties.

Note that when $\varepsilon=0$, that is in the case of the Schr\"odinger-Poisson system,
Theorem \ref{teorema1} gives the result of Zhao and Zhao \cite{ZZ},
which indeed concerns with a slightly different nonlinearity 
of type $g(x,u)= \mu Q(x)|u|^{q-2}u+K(x)|u|^{2^{*}-2}u$.
%However with our approach of the truncated functional we do not need
%to distinguish the cases depending on the values of $q$ as it is done in \cite{ZZ}.
However with slight changes,  our theorems also hold
if in front of the critical nonlinearity there is coefficient $K(x)$ as in \cite{ZZ}.

Moreover, as a byproduct of our Theorem \ref{teorema3} we deduce the existence
of a positive solution (for $\lambda$ large) for the Schr\"odinger-Poisson system
even in presence of  a supercritical nonlinearity, fact that we were not able to find 
in the literature.

\medskip

The paper is organized as follows.

 In Section \ref{sec:derivation}
we deduce the set of equations we are going to study.
Indeed, differently from the paper of Benmilh and Kavian \cite{BK}, we deduce the equations
under study in the framework of the Abelian Gauge Theories by considering the interaction
of the Schr\"odinger equation with the Maxwell equation described by the 
Born-Infeld Lagrangian which is the
second order approximation of the classical Maxwell Lagrangian.

 In Section \ref{sec:VF} the variational framework of the problem is introduced.
We study some properties of the second equation in the system and define the functional $J_{\lambda,\varepsilon}$
whose critical points will be the solution of the system.

In Section \ref{sec:trunc} we introduce the truncation in the original functional $J_{\lambda,\varepsilon}$. 
This will help to deal with the lack of properties
of the solution of the second equation, in contrast to the case of the Schr\"odinger-Poisson system.

In Sections \ref{sec:final}, \ref{sec:T2} and \ref{sec:T3}  the proof of 
Theorem \ref{teorema1}, \ref{teorema2} and \ref{teorema3}, respectively, is given.

%As a matter of notation, we write $o_{n}(1), o_{\varepsilon}(1)$ to denote a quantity 
%vanishing as $n,\varepsilon$ tend to its limit value.

\section{Derivation of the system}\label{sec:derivation}
Let us spend few words in this section on the physical derivation
of system \eqref{eq:P} in the framework of Abelian Gauge Theory.

Our starting point is the Lagrangian of the nonlinear Schr\"odinger equation.
Indeed it is well known that the Euler Lagrange equation of the Lagrangian density
\begin{eqnarray*} 
\mathcal L_{\textrm{S}}(\psi) = i\hbar  \overline \psi \partial_{t}\psi-\frac{\hbar^{2}}{2m}|\nabla \psi|^{2}+G(x,|\psi|)
\end{eqnarray*}
is exactly the nonlinear Schr\"odinger equation.
Here $G(x,|\psi|)$ is a suitable nonlinearity depending on the physical model.

The interaction of the wave function $\psi$ with   the electromagnetic field
generated by its motion, is described by means of the {\sl covariant derivative}
in the framework of the Abelian Gauge Theory. In Physics this is known also
as {\sl minimal coupling rule} and, practically, consists in substituting the ordinary derivative
in $\mathcal L_{\textrm{S}}$ with the new operators (the {\sl covariant}, or {\sl Wayl  derivatives}):
$$\partial_{t} \rightarrow\partial_{t}+\frac{iq}{\hbar}\phi,\quad \nabla \rightarrow \nabla-\frac{iq}{\hbar}\mathbf A$$
where $\phi$ and $\mathbf A$ are the gauge potentials of the electromagnetic field, that is
$$\mathbf E = -\nabla \phi -\partial_{t}\mathbf A,\quad \mathbf B= \nabla \times\mathbf A,$$
$q$ is the electric charge and $\hbar$ the normalized Plank constant.
In this way one obtains from $\mathcal L_{S}$ the  Lagrangian density of the interaction
\begin{eqnarray*}%\label{eq:}
\mathcal L_{\textrm{Int}}(\psi,\phi,\mathbf A) = i\hbar \overline \psi \partial_{t} \psi - q \phi |\psi|^{2} 
-\frac{\hbar^{2}}{2m}\left|\nabla \psi -\frac{iq}{\hbar}\mathbf A\psi\right|^{2} +G(x,|\psi|).
\end{eqnarray*}
It is convenient to write the wave function in polar form, i.e.
 $\psi(x,t) = u(x,t)e^{iS(x,t)/\hbar}$
 with $u, S: \mathbb R^{3}\times\mathbb R\to \mathbb R$. Then the  Lagrangian density
 of the interaction takes the form
\begin{eqnarray*}\label{eq:interazione}
\mathcal L_{\textrm{Int}}(u,S,\phi,\mathbf A) = i\hbar u \partial_{t} u -\frac{\hbar^{2}}{2m}|\nabla u|^{2}-
\left( \partial_{t}S +q\phi +\frac{1}{2m}\left|\nabla S -q\mathbf A\right|^{2}\right)u^{2}
 +G(x,u).
\end{eqnarray*}
However this is not the total Lagrangian density of the system since the e.m. field (and then $\phi$
and $\mathbf A$) is an unknown, hence also the 
Lagrangian density of the e.m. has to be considered.

The existing literature concerning the Schr\"odinger-Maxwell system, mainly
consider the usual classical Lagrangian density of Maxwell:
$$\mathcal L_{\textrm{M}}(\phi,\mathbf A) = \frac{1}{8\pi}\left( |\mathbf E|^{2} -|\mathbf B|^{2}\right)
=\frac{1}{8\pi} \left(|\nabla \phi+\partial_{t}\mathbf A|^{2} - |\nabla \times \mathbf A|^{2}\right).$$
Here we use  the Lagrangian density of the Born-Infeld theory, that is
$$\mathcal L_{\textrm{BI}}= \frac{1}{4\pi}\left[ \frac{1}{2} \left(|\mathbf E |^{2} - |\mathbf B|^{2}\right) +\frac{\beta}{4}
\left(|\mathbf E |^{2} - |\mathbf B|^{2}\right)^{2} \right], \quad \beta>0.$$
In this way the total Lagrangian density, which describes the dynamic of the motion of the matter field
$\psi$ and the e.m. field $(\mathbf E, \mathbf B)$, is given by
\begin{eqnarray*}\label{eq:}
\mathcal L_{\textrm{tot}}(u,S,\phi,\mathbf A) &=&  \mathcal L_{\textrm{Int}}(u,S,\phi,\mathbf A) +\mathcal L_{\textrm{BI}}(\phi,\mathbf A ) \\
&=&
i\hbar u \partial_{t} u -\frac{\hbar^{2}}{2m}|\nabla u|^{2}-
\left( \partial_{t}S +q\phi +\frac{1}{2m}\left|\nabla S -q\mathbf A\right|^{2}\right)u^{2}
 +G(x,u) \\
 &+& \frac{1}{4\pi}\left[ \frac{1}{2} \left(|\mathbf E |^{2} - |\mathbf B|^{2}\right) +\frac{\beta}{4}
\left(|\mathbf E |^{2} - |\mathbf B|^{2}\right)^{2} \right].
\end{eqnarray*}

The Euler Lagrange equations of this Lagrangian (that is, by making the variations
with respect to $u,S,\phi,\mathbf A$) are easily computed and are
\begin{equation*}\label{eq:}
%\left\{
%\begin{array}[c]{ll}
-\dis\frac{\hbar^{2}}{2m}\Delta u +\left(\partial_{t}S +q \phi +\frac{1}{2m}|\nabla S-q\mathbf A|^{2} \right) u = g(x, u)  
\end{equation*}
\begin{equation*}
\partial_{t}u^{2} +\dis\frac{1}{m} \nabla \cdot \left[(\nabla S -q\mathbf A)u^{2}\right]=0 
\end{equation*}
\begin{equation*}
-\nabla\cdot\Big( Z_{\mathbf A, \phi} (\nabla \phi+\partial_{t}\mathbf A)\Big) =4\pi qu^{2} 
\end{equation*}
\begin{equation*}
\partial_{t} \Big( Z_{\mathbf A, \phi}(\partial_{t}\mathbf A+\nabla\phi)\Big) + 
\nabla\times \Big( Z_{\mathbf A, \phi} \nabla \times\mathbf A \Big) = 4\pi q(\nabla S-q\mathbf A)u^{2}
%\end{array}
%\right.
\end{equation*}
where we have set, for brevity, 
$$ Z_{\mathbf A, \phi}:= 1+\beta|\mathbf A_{t}+\nabla \phi|^{2} -\beta|\nabla \times \mathbf A|^{2}$$
and $g(x,s)=\partial_{s} G(x,s).$

An interesting physical situation is that of 
standing waves in the purely electrostatic case which appears when we look for solutions of type 
$$u(x,t)=u(x), \quad S(x,t)=\omega\hbar t,\quad  \phi(x,t) = \phi(x), \quad \mathbf A(x,t)=\mathbf 0$$
which gives rise to wave functions of type $\psi(x,t) =u(x)e^{i \omega t}$.
In this case the above set of equations is reduced to 
\begin{equation}\label{eq:constantes}
\left\{
\begin{array}[c]{ll}
-\dis\frac{\hbar^{2}}{2m} \Delta u +\omega u+q\phi u = g(x,u) & \ \mbox{in} \ \
\mathbb{R}^{3}, \medskip \\
 -\nabla \cdot\left( \nabla \phi + \beta|\nabla \phi|^{2}\nabla \phi \right)= 4\pi qu^{2}& \  \mbox{in} \ \ \mathbb{R}^{3}.
\end{array}
 \right.
\end{equation}
Observe that up to change $\phi$ with $-\phi$, we can assume without lost of
generality that $q>0$. 
By ``normalizing'' the constants
$$\frac{\hbar^{2}}{2m} = \omega =q=4\pi =1,$$
and setting 
$$\beta=\varepsilon\quad \text{and} \quad g(x,u)=\lambda f(x,u)+|u|^{2^{*}-2}u$$ 
problem \eqref{eq:constantes} becomes   exactly  problem \eqref{eq:P}.

\section{The variational framework}\label{sec:VF}

We begin by saying that   that  the single equation
$$-\Delta \phi -\beta\Delta_{4}\phi = \rho \quad (\beta>0)$$
has been very studied in the mathematical literature,
since it falls down into the class of equations involving the $p\&q$ Laplacian.
In particular in \cite{FOP}, where the authors
study the case in which the distribution $\rho$ is a Dirac delta or an $L^{1}$ function,
 it is shown that there is the continuous embedding 
$$X\hookrightarrow L^{\infty}(\mathbb R^{3})$$
(see \cite[Proposition 8]{FOP}). 
As a consequence,  the solutions $\phi_{\lambda,\varepsilon}$ given in Theorem \ref{teorema1}
will be automatically in $L^{\infty}(\mathbb R^{3})$; moreover once we prove that
 $\lim_{\lambda\to+\infty}\|\phi_{\lambda,\varepsilon}\|_{X} =0$, then we have for free that
$\lim_{\lambda\to+\infty}|\phi_{\lambda,\varepsilon}|_{\infty} =0$.

 \bigskip

We have now a first variational principle; indeed, it is easy to see that the critical points of the $C^{2}$ functional
\begin{equation}\label{eq:F}
\mathcal J_{\lambda,\varepsilon}(u,\phi) = \frac{1}{2}\|u\|_{H^{1}}^{2}+\frac{1}{2}\int_{\mathbb R^{3}} \phi u^{2} -\lambda\int_{\mathbb R^{3}} F(x,u) -\frac{1}{2^{*}}\int_{\mathbb R^{3}} |u|^{2^{*}}
-\frac{1}{4}\int_{\mathbb R^{3}} |\nabla \phi|^{2} -\frac{\varepsilon^{4}}{8}\int_{\mathbb R^{3}} |\nabla \phi|^{4}
\end{equation}
on $H^{1}(\mathbb R^{N})\times X $
are exactly the weak solutions of \eqref{eq:P}, according to \eqref{eq:ws1} and \eqref{eq:ws2}.
However since this functional $\mathcal J_{\lambda,\varepsilon}$ is strongly indefinite,
we adopt a reduction procedure which is successfully used 
with the ``classical'' Schr\"odinger-Poisson system.

\subsection{Study of the quasilinear Schr\"odinger-Poisson equation}
Let us consider for convenience the following general problem
\begin{equation}\label{eq:geral}
-\Delta \phi -\Delta_{4}\phi = g \in X'.
\end{equation}
This problem has a unique solution $\phi_{g}$.
This follows by the fact that the $C^{1}$ functional
$$\phi \in X \longmapsto \frac{1}{2}\int_{\mathbb R^{3}} |\nabla \phi|^{2} + \frac{1}{4}\int_{\mathbb R^{3}} |\nabla \phi|^{4} - g[\phi] \in \mathbb R$$
is strictly convex, coercive and weakly lower semicontinuous; hence
possess a unique critical point, denoted with $\phi(g)$, which is a minimum\and a solution of 
\eqref{eq:geral}.
Alternatively, the existence of a unique solution $\phi(g)$ can be deduced by using
the Minty-Browder's Theorem \cite[Teorema V. 15]{brezis}, since
the operator $\textrm{T}:X\rightarrow X'$ defined by duality by
$$
\langle \textrm{T}(\phi),\xi\rangle:=\int_{\mathbb{R}^{3}} \nabla \phi\nabla \xi 
+\int_{\mathbb{R}^{3}} |\nabla \phi|^{2}\nabla\phi\nabla \xi ,
$$
 is continuous,  strictly monotone and coercive.
Anyway, from unicity result,  it is well defined the solution operator
\begin{equation*}\label{eq:Phi}
\Phi:X' \to X, \ \ \Phi(g):=\phi(g)
\end{equation*}
associated to equation \eqref{eq:geral}.
%where $\phi \in X$ is the unique solution of the problem 
%$$
%-\Delta \phi - \Delta_4 \phi = f(x) \ \ \ \mbox{in} \ \ \mathbb{R}^{N},
%$$

In the next result, we show that the solution operator $\Phi$ is continuous. 
\begin{lemma}\label{solutionoperator}
Let $g_{n}\to g$ in $X'$. Then, %setting for simplicity $\phi_n:=\Phi(g_n)$ and $\phi_:=\Phi(g)$, 
we have
$$\int_{\mathbb R^{3}} |\nabla \phi(g_{n})|^{2} \to \int_{\mathbb  R^{3}} |\nabla \phi(g)|^{2}, \quad
 \int_{\mathbb  R^{3}} |\nabla \phi(g_{n})|^{4}\to \int_{\mathbb  R^{3}} |\nabla \phi(g)|^{4}$$
and consequently
$$\phi(g_{n})\to \phi(g) \text{ in } \  L^{\infty}(\mathbb R^{3}).$$
In particular the operator $\Phi$ is continuous.
\end{lemma}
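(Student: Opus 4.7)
Write $\phi_n:=\phi(g_n)$ and $\phi:=\phi(g)$. The strategy is to exploit the strict monotonicity of the operator $T$ introduced just before the statement, which reduces continuity to a quantitative estimate on $\langle T(\phi_n)-T(\phi),\phi_n-\phi\rangle$, and then to invoke the elementary pointwise inequalities for the $p$-Laplacian with $p=2$ and $p=4$.

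\textbf{Step 1 (a priori bound).} Testing $-\Delta\phi_n-\Delta_4\phi_n=g_n$ with $\phi_n$ itself gives
\[
\int_{\mathbb R^3}|\nabla\phi_n|^2+\int_{\mathbb R^3}|\nabla\phi_n|^4=g_n[\phi_n]\le \|g_n\|_{X'}\bigl(|\nabla\phi_n|_2+|\nabla\phi_n|_4\bigr).
\]
Since $\{g_n\}$ is bounded in $X'$, an elementary use of Young's inequality yields that $\{\phi_n\}$ is bounded in $X$, and hence also $\{\phi_n-\phi\}$ is bounded in $X$.

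\textbf{Step 2 (monotonicity identity).} Subtracting the weak formulations of the two equations and testing with $\phi_n-\phi$ gives
\[
\langle T(\phi_n)-T(\phi),\phi_n-\phi\rangle=(g_n-g)[\phi_n-\phi]\le \|g_n-g\|_{X'}\,\|\phi_n-\phi\|_X,
\]
which tends to $0$ by the bound in Step~1 and the hypothesis $g_n\to g$ in $X'$.

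\textbf{Step 3 (strong convergence in $X$).} Expanding the left hand side,
\[
\langle T(\phi_n)-T(\phi),\phi_n-\phi\rangle=\int_{\mathbb R^3}|\nabla(\phi_n-\phi)|^2+\int_{\mathbb R^3}\bigl(|\nabla\phi_n|^2\nabla\phi_n-|\nabla\phi|^2\nabla\phi\bigr)\cdot\nabla(\phi_n-\phi).
\]
The first term is already $\ge 0$. For the second term I use the standard pointwise inequality for the $4$-Laplacian vector field (valid since $p=4\ge 2$):
\[
\bigl(|a|^2 a-|b|^2 b\bigr)\cdot(a-b)\ge c\,|a-b|^4\qquad\text{for all }a,b\in\mathbb R^3,
\]
with a universal constant $c>0$. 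Combining with Step~2 yields
\[
\int_{\mathbb R^3}|\nabla(\phi_n-\phi)|^2\longrightarrow 0\quad\text{and}\quad\int_{\mathbb R^3}|\nabla(\phi_n-\phi)|^4\longrightarrow 0,
\]
i.e.\ $\phi_n\to\phi$ strongly in $X$. In particular the two displayed norm convergences in the statement follow at once.

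\textbf{Step 4 ($L^\infty$ convergence).} By the continuous embedding $X\hookrightarrow L^\infty(\mathbb R^3)$ recalled at the beginning of Section~\ref{sec:VF} (via \cite[Proposition 8]{FOP}), the strong convergence $\phi_n\to\phi$ in $X$ upgrades to convergence in $L^\infty(\mathbb R^3)$, completing the proof.

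The main (but standard) technical obstacle is the pointwise monotonicity inequality in Step~3; once it is available, the $p$-Laplacian part of $T$ actually controls the difference $\phi_n-\phi$ with a coercivity of order four, so that convergence in $X'$ of the data is enough.
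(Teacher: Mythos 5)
Your proposal is correct and follows essentially the same route as the paper: subtract the two weak formulations, test with $\phi(g_n)-\phi(g)$, apply the pointwise monotonicity (Simon) inequality for the $4$-Laplacian to get strong convergence in $X$, and conclude $L^\infty$ convergence via the embedding $X\hookrightarrow L^{\infty}(\mathbb R^{3})$ from \cite{FOP}. If anything, your Step~1 (the a priori bound on $\{\phi(g_n)\}$ in $X$) makes explicit a point the paper leaves implicit, namely that the right-hand side $(g_n-g)[\phi(g_n)-\phi(g)]$ tends to zero even though the test function varies with $n$.
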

\begin{proof}
By assumptions for every $w\in X$,
$$
\displaystyle\int_{\mathbb{R}^{3}}\nabla \phi(g_{n}) \nabla w + \displaystyle\int_{\mathbb{R}^{3}}|\nabla \phi(g_{n})|^{2} \nabla \phi(g_{n}) \nabla w
= g_n [w]
$$
and 
$$
\displaystyle\int_{\mathbb{R}^{3}}\nabla \phi(g) \nabla w + \displaystyle\int_{\mathbb{R}^{3}}|\nabla \phi(g)|^{2} \nabla \phi(g) \nabla w= g [w].
$$
%Since
%$$
%\displaystyle\int_{\mathbb{R}^{N}}f_n w \to \displaystyle\int_{\mathbb{R}^{N}}f w,
%$$
We conclude that 
$$
\int_{\mathbb{R}^{3}}\nabla \phi(g_{n}) \nabla w + \int_{\mathbb{R}^{3}}|\nabla \phi(g_{n})|^{2} \nabla \phi(g_{n}) \nabla w
- \displaystyle\int_{\mathbb{R}^{3}}\nabla \phi(g) \nabla w - \displaystyle\int_{\mathbb{R}^{3}}|\nabla \phi(g)|^{2} \nabla \phi(g) \nabla w = o_n(1).
$$
Considering $w=\phi(g_{n})-\phi(g)$, we derive
\begin{eqnarray*}
\displaystyle\int_{\mathbb{R}^{3}}|\nabla \phi(g_{n})-\nabla \phi(g) |^{2} + \int_{\mathbb{R}^{3}}\left(|\nabla \phi(g_{n})|^{2}\nabla \phi(g_{n}))- |\nabla \phi(g)|^{2}\nabla \phi(g)\right)(\nabla \phi(g_{n}) -\nabla \phi(g))= o_n(1)
\end{eqnarray*}
and then by the Simon inequality there exists $C>0$ such that
\begin{multline*}
\int_{\mathbb{R}^{3}}|\nabla \phi(g_{n})-\nabla \phi(g) |^{2}+ C\int_{\mathbb{R}^{3}}|\nabla \phi(g_{n})-\nabla \phi(g) |^{4}
\leq  \\ \int_{\mathbb{R}^{3}}|\nabla \phi(g_{n})-\nabla \phi(g) |^{2} + \int_{\mathbb{R}^{3}}\left(|\nabla \phi(g_{n})|^{2}\nabla \phi(g_{n})- \nabla \phi(g)|^{2}\nabla \phi(g)\right)(\nabla \phi(g_{n}) -\nabla \phi(g))= o_n(1).
\end{multline*}
which concludes the proof.
\end{proof}
 
 Of course all that we have seen here also holds for the problem
 $$-\Delta \phi -\varepsilon^{4}\Delta_{4}\phi = g \in X',$$
by considering the map $\Phi_{\varepsilon}$, for every $\varepsilon>0$.

\medskip

\subsection{The reduction argument}
Let us consider now  a particular case of the previous subsection.
Let $u\in H^{1}(\mathbb R^{3})$ and note that $u^{2}\in X'$  in the sense that
the map
\begin{equation*}\label{eq:}
g_{u^{2}}:\phi \in X \longmapsto \int _{\mathbb R^{3}}\phi u^{2}\in \mathbb R
\end{equation*}
is linear and continuous.
Then for every $u\in H^{1}(\mathbb R^{3})$ fixed, there exists a unique element in $X$,
that we denote with $\phi_{\varepsilon}(u)$, such that
\begin{equation}\label{eq:2eq}
-\Delta \phi_{\varepsilon}(u) -\varepsilon^{4}\Delta_{4}\phi_{\varepsilon}(u) = u^{2} \quad \text{ in }\mathbb R^{3}.
\end{equation}
%This follows by the fact that the $C^{1}$ functional
%$$\phi \in X \longmapsto \frac{1}{2}\int |\nabla \phi|^{2} + \frac{1}{4}\int |\nabla \phi|^{4} - \int \phi u^{2} \in \mathbb R$$
%is strictly convex, coercive and weakly lower semicontinuous; hence
%possess a unique critical point, denoted with $\phi_{u}$, which is a minimum.
%Alternatively, the existence of a unique solution $\phi_{u}$ can be deduced by using
%the Minty-Browder's Theorem \cite[Teorema V. 15]{brezis}, since
%the operator $T:X\rightarrow X'$ defined by duality by
%$$
%\langle T(\phi),\xi\rangle:=\int_{\mathbb{R}^{N}} \nabla \phi\nabla \xi \, dx
%+\int_{\mathbb{R}^{N}} |\nabla \phi|^{2}\nabla\phi\nabla \xi \, dx,
%$$
%where $X=D^{1,2}(\mathbb{R}^{N})\cap D^{1,4}(\mathbb{R}^{N})$.
%\begin{lemma}\label{monotonocoercivo}
% is continuous,  monotone and coercive.
%\end{lemma}
%\begin{proof}It is follows by properties of operators Laplacian and $4$-Laplacian  because
%$$
%<Tu,w>= <-\Delta u - \Delta_4 u, w>.
%$$
%\end{proof}
 In the remaining of the paper, $\phi_{\varepsilon}(u)$ will always denote the unique solution of \eqref{eq:2eq},
which, {\sl en passant}, satisfies
\begin{equation}\label{eq:sostituicao}
\int_{\mathbb R^{3}} |\nabla \phi_{\varepsilon}(u)|^{2} + \varepsilon^{4}\int_{\mathbb R^{3}}|\nabla \phi_{\varepsilon}(u)|^{4} = \int_{\mathbb R^{3}} \phi_{\varepsilon}(u) u^{2}.
\end{equation}
In particular we have the following.
\begin{lemma}\label{lem:facile}
If $\{u_{n}\}$ converges to $u$ in $L^{12/5}(\mathbb R^{3})$,
%$L^{4N/(N+2)}(\mathbb R^{N})$,
 then, for every fixed $\varepsilon>0$: \medskip
 \begin{itemize}
 \item[(a)] $\displaystyle \lim_{n\to+\infty}\int_{\mathbb R^{3}} |\nabla \phi_{\varepsilon}(u_{n})|^{2} = \int_{\mathbb R^{3}} |\nabla \phi_{\varepsilon}(u)|^{2}$, \medskip
\item[(b)] $ \displaystyle\lim_{n\to+\infty}\int_{\mathbb R^{3}} |\nabla \phi_{\varepsilon}(u_{n})|^{4} =\int_{\mathbb R^{3}} |\nabla \phi_{\varepsilon}(u)|^{4}$, \medskip
\item[(c)]$ \displaystyle\lim_{n\to+\infty}\int_{\mathbb R^{3}}\phi_{\varepsilon}(u_{n})u_{n}^{2} = \int_{\mathbb R^{3}}\phi_{\varepsilon}(u) u^{2} $,\medskip
\item[(d)] $\displaystyle\lim_{n\to+\infty}\phi_{\varepsilon}(u_{n})= \phi_{\varepsilon}(u)$ in $L^{\infty}(\mathbb R^{3}).$
 \end{itemize}
%$$
%\int_{\mathbb R^{3}} |\nabla \phi_{\varepsilon}(u_{n})|^{2} \longrightarrow \int_{\mathbb R^{3}} |\nabla \phi_{\varepsilon}(u)|^{2}, \qquad \int_{\mathbb R^{3}} |\nabla \phi_{\varepsilon}(u_{n})|^{4} \longrightarrow\int_{\mathbb R^{3}} |\nabla \phi_{\varepsilon}(u)|^{4}.
%$$
%Moreover
%$$\int_{\mathbb R^{3}}\phi(u_{n})u_{n}^{2} \longrightarrow \int_{\mathbb R^{3}}\phi(u) u^{2} \quad \text{ and } \quad
%\phi(u_{n})\longrightarrow \phi(u) \quad \text{in } L^{\infty}(\mathbb R^{3}).
%$$
\end{lemma}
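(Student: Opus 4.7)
The plan is to reduce everything to Lemma \ref{solutionoperator} by proving that $u_n\to u$ in $L^{12/5}(\mathbb R^3)$ implies $g_{u_n^2}\to g_{u^2}$ in the dual space $X'$, where $g_{v^2}[\phi]:=\int_{\mathbb R^3}\phi v^2$. Once that key continuity is established, parts (a), (b), and (d) follow by applying Lemma \ref{solutionoperator} (with $\Phi$ replaced by $\Phi_\varepsilon$, as pointed out right after its proof) to $\phi_\varepsilon(u_n)=\Phi_\varepsilon(u_n^2)$ and $\phi_\varepsilon(u)=\Phi_\varepsilon(u^2)$; part (c) then comes for free from the energy identity \eqref{eq:sostituicao}.

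First I would show that $g_{v^2}\in X'$ with norm controlled by $\|v\|_{12/5}^{2}$. By the Sobolev embedding $D^{1,2}(\mathbb R^3)\hookrightarrow L^{6}(\mathbb R^3)$ one has $|\phi|_{6}\leq C|\nabla\phi|_{2}\leq C\|\phi\|_{X}$ for every $\phi\in X$. Applying H\"older's inequality with conjugate exponents $6$ and $6/5$ gives
\begin{equation*}
\bigl|g_{v^2}[\phi]\bigr|\leq |\phi|_{6}\,|v^{2}|_{6/5}=|\phi|_{6}\,|v|_{12/5}^{2}\leq C\|\phi\|_{X}\,|v|_{12/5}^{2},
\end{equation*}
so indeed $g_{v^2}\in X'$. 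The same computation applied to the difference $u_n^2-u^2=(u_n-u)(u_n+u)$, together with another H\"older estimate, yields
\begin{equation*}
\|g_{u_n^{2}}-g_{u^{2}}\|_{X'}\leq C\,|u_n-u|_{12/5}\,|u_n+u|_{12/5},
\end{equation*}
and since $\{|u_n|_{12/5}\}$ is bounded by the hypothesis $u_n\to u$ in $L^{12/5}$, the right-hand side tends to $0$.

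With this convergence in $X'$ established, Lemma \ref{solutionoperator} (in its $\varepsilon$-version) delivers at once $|\nabla\phi_\varepsilon(u_n)|_{2}\to|\nabla\phi_\varepsilon(u)|_{2}$ and $|\nabla\phi_\varepsilon(u_n)|_{4}\to|\nabla\phi_\varepsilon(u)|_{4}$, which are parts (a) and (b), as well as $\phi_\varepsilon(u_n)\to\phi_\varepsilon(u)$ in $L^\infty(\mathbb R^3)$, which is part (d). Finally, part (c) follows from identity \eqref{eq:sostituicao} applied both to $u_n$ and to $u$:
\begin{equation*}
\int_{\mathbb R^{3}}\phi_\varepsilon(u_n)\,u_n^{2}=\int_{\mathbb R^{3}}|\nabla\phi_\varepsilon(u_n)|^{2}+\varepsilon^{4}\int_{\mathbb R^{3}}|\nabla\phi_\varepsilon(u_n)|^{4}\longrightarrow\int_{\mathbb R^{3}}|\nabla\phi_\varepsilon(u)|^{2}+\varepsilon^{4}\int_{\mathbb R^{3}}|\nabla\phi_\varepsilon(u)|^{4}=\int_{\mathbb R^{3}}\phi_\varepsilon(u)\,u^{2}.
\end{equation*}
The only subtle step is identifying the correct summability $L^{12/5}$ that makes $u\mapsto u^{2}$ land in $X'$; everything else is a mechanical application of previously proved material.
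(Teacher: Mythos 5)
Your proof is correct and follows essentially the same route as the paper: both establish $g_{u_n^2}\to g_{u^2}$ in $X'$ via the Sobolev embedding $D^{1,2}(\mathbb R^3)\hookrightarrow L^6(\mathbb R^3)$ and H\"older with exponents $6$ and $6/5$, and then invoke Lemma \ref{solutionoperator}. Your write-up is merely more explicit than the paper's (the factorization $u_n^2-u^2=(u_n-u)(u_n+u)$ and the derivation of (c) from \eqref{eq:sostituicao} are left implicit there), which is fine.
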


\begin{proof}
%If $\{u_{n}\}$ converges to $u$ in $L^{p}(\mathbb R^{N})$, then
Under our assumptions we have,
$$\|g_{u_{n}^{2}} - g_{u^{2}}\| = \sup_{\|\phi\|_{X}=1} \Big| \int_{\mathbb R^{3}} \phi (u_{n}^{2} - u^{2})\Big|\leq |\phi|_{2^{*}} |u_{n}^{2} - u^{2}|_{(2^{*})'}\leq C |u_{n}^{2} - u^{2}|_{6/5}\to 0.$$
Then we can apply Lemma \ref{solutionoperator} and  conclude the proof.
\end{proof}

%As a consequence of Lemma  \ref{lem:facile} we have
%\begin{equation}\label{eq:convuniforme}
%u_{n}\to u\quad \text{ in }H^{1}(\mathbb R^{3}) \  \Longrightarrow \  \phi_{n}\to \phi_{u} \quad \text{in } L^{\infty}(\mathbb R^{3}).
%\end{equation}

%\textcolor{red}{Isso vai ser usado quando $u_{n}, u\in H^{1}(\mathbb R^{N})$, portanto tem que ser $4N/
%(N+2)\leq2^{*}$,
%ou seja $N\leq 6$.
%Por isso digo: vamos colocar de uma vez $N=3$. Isso vai implicar por exemplo a coisa boa que $X\hookrightarrow 
%L^{\infty}(\mathbb R^{3})$ que espero possa ajudar para ter melhores resultados...}

We introduce  the map
$$\Phi_{0}: u\in H^{1}(\mathbb R^{3})\mapsto \phi_{0}(u)\in D^{1,2}(\mathbb R^{3}).$$
%the solution map of the Poisson equation, which associates to $u$
%the unique solution %(up to a multiplicative constant)
% of $-\Delta\phi=u^{2}$ in $\mathbb R^{3}$.
Many properties of this map are well known, 
in particular  (a) and (c) of Lemma \ref{lem:facile}.
%are true also for $\varepsilon=0$, that is for the solution of the Poisson equation $-\Delta\phi=u^{2}$.

The next result is a consequence of the fact that $\mathcal J_{\lambda,\varepsilon}$ is $C^{2}$ 
and the Implicit Function Theorem.
The arguments used to prove Lemma \ref{lem:1} and Lemma \ref{lem:2} 
are exactly the same as  in \cite{BF}
for the Schr\"odinger-Poisson system
(that is for the map $\Phi_{0}$ defined above), or \cite{BFKleinG} for the Klein-Gordon-Maxwell
system.

\begin{lemma}\label{lem:1}
For all $\varepsilon>0$, let $G_{\Phi_{\varepsilon}}$ be the graph of the map $\Phi_{\varepsilon}: u\in H^{1}(\mathbb R^{3})\mapsto \phi_{\varepsilon}(u)\in X$.
Then
$$G_{\Phi_{\varepsilon}}=\left\{(u,\phi)\in H^{1}(\mathbb R^{3})\times X: \partial_{\phi}\mathcal J_{\lambda,\varepsilon}(u,\phi) = 0\right\}.$$
Moreover 
$$\Phi_{\varepsilon}\in C^{1}(H^{1}(\mathbb R^{3}); X ).$$
\end{lemma}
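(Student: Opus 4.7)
The plan is to prove the two assertions in sequence: the graph characterization by a direct computation combined with the uniqueness result of Section \ref{sec:VF}, and the $C^{1}$ regularity via the Implicit Function Theorem applied to $\partial_{\phi}\mathcal{J}_{\lambda,\varepsilon}$.

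First I would compute, for any $\xi \in X$,
\[
\partial_{\phi}\mathcal{J}_{\lambda,\varepsilon}(u,\phi)[\xi]
= \tfrac{1}{2}\int_{\mathbb{R}^{3}} u^{2}\xi
- \tfrac{1}{2}\int_{\mathbb{R}^{3}}\nabla\phi\nabla\xi
- \tfrac{\varepsilon^{4}}{2}\int_{\mathbb{R}^{3}}|\nabla\phi|^{2}\nabla\phi\nabla\xi .
\]
Setting this to zero for every $\xi \in X$ is, up to a harmless factor of $1/2$, exactly the weak form of equation \eqref{eq:2eq}; by the uniqueness of the solution of that equation established in Section \ref{sec:VF} via the Minty--Browder argument, this vanishes iff $\phi = \phi_{\varepsilon}(u) = \Phi_{\varepsilon}(u)$. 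This gives the graph identity $G_{\Phi_{\varepsilon}} = \{(u,\phi) : \partial_{\phi}\mathcal{J}_{\lambda,\varepsilon}(u,\phi) = 0\}$.

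For the $C^{1}$ regularity I would invoke the Implicit Function Theorem for the map $F(u,\phi) := \partial_{\phi}\mathcal{J}_{\lambda,\varepsilon}(u,\phi)$, which is of class $C^{1}$ from $H^{1}(\mathbb{R}^{3})\times X$ into $X'$ (since $\mathcal{J}_{\lambda,\varepsilon}$ is $C^{2}$) and vanishes on $G_{\Phi_{\varepsilon}}$. The key verification is that, at every point $\phi = \Phi_{\varepsilon}(u)$, the partial derivative $\partial_{\phi}F(u,\phi) = \partial^{2}_{\phi\phi}\mathcal{J}_{\lambda,\varepsilon}(u,\phi)$ is a topological isomorphism of $X$ onto $X'$. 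Its associated symmetric bilinear form is
\[
B(\eta,\xi) = -\tfrac{1}{2}\int_{\mathbb{R}^{3}}\nabla\eta\nabla\xi - \tfrac{\varepsilon^{4}}{2}\int_{\mathbb{R}^{3}}|\nabla\phi|^{2}\nabla\eta\nabla\xi - \varepsilon^{4}\int_{\mathbb{R}^{3}}(\nabla\phi\cdot\nabla\eta)(\nabla\phi\cdot\nabla\xi),
\]
and $-B$ is continuous on $X\times X$ and satisfies $-B(\eta,\eta)\geq \tfrac{1}{2}|\nabla\eta|_{2}^{2}\geq 0$, with strict inequality whenever $\eta \neq 0$. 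This delivers injectivity immediately.

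The main obstacle will be the surjectivity of $\partial^{2}_{\phi\phi}\mathcal{J}_{\lambda,\varepsilon}(u,\Phi_{\varepsilon}(u))$: because $X$ is not a Hilbert space and the coercivity above does not directly control the $D^{1,4}$ component of $\eta$, Lax--Milgram is not directly applicable. I would handle this exactly as in \cite{BF}: the linearization at $\phi = \Phi_{\varepsilon}(u)$ of the nonlinear operator associated with $-\Delta - \varepsilon^{4}\Delta_{4}$ (the $\varepsilon$-analogue of the operator $\textrm{T}$ defined in Section \ref{sec:VF}) is a continuous, strictly monotone and coercive linear operator from $X$ to $X'$, and hence a bijection by the Minty--Browder theorem; injectivity is already in hand, and surjectivity can alternatively be obtained by Galerkin approximation together with the monotonicity estimate, following the Schr\"odinger--Poisson template verbatim. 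Once this is settled, the Implicit Function Theorem provides a local $C^{1}$ selection solving $F(u,\phi) = 0$ near every pair $(u_{0}, \Phi_{\varepsilon}(u_{0}))$; the uniqueness in Section \ref{sec:VF} forces this selection to coincide with $\Phi_{\varepsilon}$, yielding $\Phi_{\varepsilon}\in C^{1}(H^{1}(\mathbb{R}^{3});X)$.
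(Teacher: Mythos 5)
Your first half is correct and is exactly the paper's (implicit) argument: differentiating \eqref{eq:F} in $\phi$ gives, up to the factor $1/2$, the weak formulation \eqref{eq:ws2}, and the uniqueness of the solution of \eqref{eq:2eq} established in Section \ref{sec:VF} turns the zero set of $\partial_{\phi}\mathcal J_{\lambda,\varepsilon}$ into the graph of $\Phi_{\varepsilon}$. Your plan for the $C^{1}$ part is also the paper's plan (Implicit Function Theorem, following \cite{BF}), your formula for $B=\partial^{2}_{\phi\phi}\mathcal J_{\lambda,\varepsilon}(u,\phi)$ is correct, and the injectivity of the linearized operator follows from it as you say.

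The gap is the surjectivity step, and it is genuine. Minty--Browder (\cite[Theorem V.15]{brezis}) requires coercivity on $X$, i.e. $\langle A\eta,\eta\rangle/\|\eta\|_{X}\to+\infty$ as $\|\eta\|_{X}\to\infty$. For the nonlinear operator $\mathrm{T}$ of Section \ref{sec:VF} this holds because $\langle \mathrm{T}\phi,\phi\rangle=|\nabla\phi|_{2}^{2}+|\nabla\phi|_{4}^{4}$ dominates both components of $\|\phi\|_{X}$; but linearization destroys the quartic term, and for $A=-\partial^{2}_{\phi\phi}\mathcal J_{\lambda,\varepsilon}(u,\phi)$ your own estimate gives only $\langle A\eta,\eta\rangle\geq\frac{1}{2}|\nabla\eta|_{2}^{2}$, which does not control $|\nabla\eta|_{4}$ --- precisely the obstruction you acknowledge one sentence earlier. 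So the appeal to Minty--Browder is inconsistent with your own observation: strict monotonicity of a linear operator without coercivity yields injectivity, not surjectivity. The Galerkin alternative fails for the same reason, since the a priori bound is only in $D^{1,2}(\mathbb R^{3})$ and the weak limit need not lie in $D^{1,4}(\mathbb R^{3})$.

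Worse, the isomorphism you are trying to establish is actually \emph{false} at $u_{0}=0$. There $\Phi_{\varepsilon}(0)=0$, the $4$-Laplacian linearizes to zero, and the linearized operator is $-\Delta$, which is not onto $X'$: take $p(x)=\chi(x)|x|^{-1}$ with $\chi$ a cut-off near the origin, so that $\nabla p\in L^{4/3}(\mathbb R^{3})\setminus L^{2}(\mathbb R^{3})$, and consider $g[\xi]=\int_{\mathbb R^{3}}\nabla p\cdot\nabla\xi$, which belongs to $X'$ by H\"older. Any $\eta\in X$ with $-\Delta\eta=g$ in $X'$ satisfies $\Delta(\eta-p)=0$ in the sense of distributions, and since $\nabla(\eta-p)\in L^{2}+L^{4/3}$ a Liouville argument forces $\nabla\eta=\nabla p$, contradicting $\nabla\eta\in L^{2}\cap L^{4}$. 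Hence the Implicit Function Theorem cannot be applied in the duality pairing $(X,X')$ at all points of the graph; to repair the argument one must either shrink the target space to one adapted to the right-hand sides that actually occur (terms of the form $uv\in L^{6/5}(\mathbb R^{3})$, which lie in $D^{-1,2}$), or prove the differentiability of $\Phi_{\varepsilon}$ directly by hand, as is done in \cite[Proposition 4.1]{BK}.
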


In view of this, the functional (recall \eqref{eq:sostituicao})
\begin{eqnarray*}
J_{\lambda,\varepsilon}(u)&:=&\mathcal J_{\lambda,\varepsilon}(u,\phi_{\varepsilon}(u))\\
&=&
\frac 12 \|u\|^{2}+ \frac 14 \displaystyle\int_{\mathbb{R}^{3}}|\nabla \phi_{\varepsilon}(u)|^{2}  + \frac{3\varepsilon^{4}}{8} \displaystyle\int_{\mathbb{R}^{3}}|\nabla \phi_{\varepsilon}(u)|^{4}  
-\lambda\displaystyle\int_{\mathbb{R}^{3}}F(x,u) - \frac{1}{2^{*}}\displaystyle\int_{\mathbb{R}^{3}}|u|^{2^{*}}
\end{eqnarray*}
is of class $C^{1}$ and in particular we have
\begin{eqnarray*}\label{eq:}
J_{\lambda,\varepsilon}'(u)[v] &= &\partial_{u} \mathcal J_{\lambda,\varepsilon}(u,\phi_{\varepsilon}(u))[v]+
\partial_{\phi}\mathcal J_{\lambda,\varepsilon}(u,\phi_{\varepsilon}(u))\circ \Phi_{\varepsilon}'(u) [v] \\ 
&=&\partial_{u} \mathcal J_{\lambda,\varepsilon}(u,\phi_{\varepsilon}(u))[v].
\end{eqnarray*}
Then by \eqref{eq:F} we have
$$J_{\lambda,\varepsilon}'(u)[v]  = \int_{\mathbb R^{3}} \nabla u \nabla v+\int_{\mathbb R^{3}} uv+\int_{\mathbb R^{3}}\phi_{\varepsilon}(u)uv
-\lambda\int_{\mathbb R^{3}} f(x,u)v -\int_{\mathbb R^{3}}|u|^{2^{*}-2}uv$$
from which a second variational principle holds:
\begin{lemma} \label{lem:2}
Let $\lambda,\varepsilon>0 $ be fixed. The following statements are equivalent:
\begin{itemize}
\item[(i)] the pair $(u_{\lambda,\varepsilon},\phi_{\lambda,\varepsilon})\in H^{1}(\mathbb R^{3})\times X$ is a critical point of $\mathcal J_{\lambda,\varepsilon}$ (i.e. $(u_{\lambda,\varepsilon},\phi_{\lambda,\varepsilon})$ is a solution of \eqref{eq:P}),\medskip
\item[(ii)] $u_{\lambda,\varepsilon}$ is a critical point of $J_{\lambda,\varepsilon}$ and $\phi_{\lambda,\varepsilon} =\phi_{\varepsilon}(u_{\lambda}) %\Phi (u_{\lambda,\varepsilon})
$.
\end{itemize}
\end{lemma}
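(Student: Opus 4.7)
The plan is to prove the two implications separately, using Lemma \ref{lem:1} and the chain rule computation for $J_{\lambda,\varepsilon}'$ already recorded in the excerpt. The whole argument is essentially a bookkeeping exercise in the reduction method; no analytical difficulty is expected.

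\medskip

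\textbf{(i) $\Rightarrow$ (ii).} Assume $(u_{\lambda,\varepsilon},\phi_{\lambda,\varepsilon})$ is a critical point of $\mathcal J_{\lambda,\varepsilon}$. Then in particular the partial derivative with respect to the $\phi$-variable vanishes, i.e.\ $\partial_\phi \mathcal J_{\lambda,\varepsilon}(u_{\lambda,\varepsilon},\phi_{\lambda,\varepsilon})=0$. By Lemma \ref{lem:1}, this places $(u_{\lambda,\varepsilon},\phi_{\lambda,\varepsilon})$ on the graph $G_{\Phi_\varepsilon}$, and uniqueness of $\phi_\varepsilon(u)$ (coming from the strict convexity argument for equation \eqref{eq:2eq}) forces $\phi_{\lambda,\varepsilon}=\phi_\varepsilon(u_{\lambda,\varepsilon})$. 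It then remains to check $J_{\lambda,\varepsilon}'(u_{\lambda,\varepsilon})=0$: for every $v\in H^1(\mathbb R^3)$, the chain rule identity already displayed in the excerpt gives
$$J_{\lambda,\varepsilon}'(u_{\lambda,\varepsilon})[v]=\partial_u\mathcal J_{\lambda,\varepsilon}(u_{\lambda,\varepsilon},\phi_\varepsilon(u_{\lambda,\varepsilon}))[v]=\partial_u\mathcal J_{\lambda,\varepsilon}(u_{\lambda,\varepsilon},\phi_{\lambda,\varepsilon})[v]=0,$$
where the last equality uses the other half of the assumption (i).

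\medskip

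\textbf{(ii) $\Rightarrow$ (i).} Conversely, suppose $u_{\lambda,\varepsilon}$ is a critical point of $J_{\lambda,\varepsilon}$ and set $\phi_{\lambda,\varepsilon}:=\phi_\varepsilon(u_{\lambda,\varepsilon})$. By definition of $\Phi_\varepsilon$, the pair $(u_{\lambda,\varepsilon},\phi_{\lambda,\varepsilon})$ lies on $G_{\Phi_\varepsilon}$, so Lemma \ref{lem:1} yields $\partial_\phi\mathcal J_{\lambda,\varepsilon}(u_{\lambda,\varepsilon},\phi_{\lambda,\varepsilon})=0$, which is precisely the weak form \eqref{eq:ws2}. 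Using once more the chain rule formula
$$\partial_u\mathcal J_{\lambda,\varepsilon}(u_{\lambda,\varepsilon},\phi_{\lambda,\varepsilon})[v]=J_{\lambda,\varepsilon}'(u_{\lambda,\varepsilon})[v]=0\qquad\forall v\in H^1(\mathbb R^3),$$
we also obtain $\partial_u\mathcal J_{\lambda,\varepsilon}(u_{\lambda,\varepsilon},\phi_{\lambda,\varepsilon})=0$, which is \eqref{eq:ws1}. Both partial derivatives of $\mathcal J_{\lambda,\varepsilon}$ at $(u_{\lambda,\varepsilon},\phi_{\lambda,\varepsilon})$ vanish, hence $(u_{\lambda,\varepsilon},\phi_{\lambda,\varepsilon})$ is a critical point of $\mathcal J_{\lambda,\varepsilon}$, i.e.\ a weak solution of \eqref{eq:P}.

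\medskip

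The only mildly delicate point is checking that the chain rule computation of $J_{\lambda,\varepsilon}'$ recorded in the excerpt is legitimate; this rests on the $C^1$-regularity of $\Phi_\varepsilon$ (stated in Lemma \ref{lem:1}) and the $C^2$-regularity of $\mathcal J_{\lambda,\varepsilon}$. Once these are granted, the equivalence is immediate, and there is no further obstacle to overcome.
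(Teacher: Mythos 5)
Your proof is correct and follows exactly the route the paper intends: the paper does not spell out the argument but relies on Lemma \ref{lem:1} (the graph characterization of $\Phi_\varepsilon$ and its $C^1$ regularity) together with the displayed chain-rule identity $J_{\lambda,\varepsilon}'(u)[v]=\partial_u\mathcal J_{\lambda,\varepsilon}(u,\phi_\varepsilon(u))[v]$, citing the standard reduction argument of Benci--Fortunato, which is precisely what you have written out. Your two implications are the straightforward unpacking of that scheme, so there is nothing to add or correct.
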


\medskip

The functional $J_{\lambda,\varepsilon}$ of the unique variable $u$  obtained by $\mathcal J_{\lambda,\varepsilon}$
is usually called the {\sl reduced functional}.

\medskip

In view of Lemma \ref{lem:2},
 the critical points of $J_{\lambda,\varepsilon}$ satisfy the equation
\begin{equation}\label{eq:equation}
- \Delta u +u+\phi_{\varepsilon}(u) u = \lambda f(x,u)+|u|^{2^{*}-2}u \ \ \  \mbox{in} \ \ \
\mathbb{R}^{3}, 
\end{equation}
which is the equation we are going to consider in the following.

\medskip

It will be convenient to  introduce the functional
\begin{equation*}\label{eq:I}
I_{\varepsilon}:u\in H^{1}(\mathbb R^{3})\longmapsto \dis\frac{1}{4}\int_{\mathbb R^{3}}|\nabla \phi_{\varepsilon}(u)|^{2} +  
\frac{3\varepsilon^{4}}{8}\int_{\mathbb R^{3}} |\nabla \phi_{\varepsilon}(u)|^4\in \mathbb R
\end{equation*}
in such a way that we can write
$$J_{\lambda,\varepsilon}(u) = \frac{1}{2}\|u\|_{H^{1}}^{2} + I_{\varepsilon}(u) -\lambda\int_{\mathbb R^{3}} F(x,u)
-\frac{1}{2^{*}}\int_{\mathbb R^{3}}|u|^{2^{*}}.$$
With this notation it is
$$I_{0} (u)= \frac{1}{4}\int_{\mathbb R^{3}}|\nabla \phi_{0}(u)|^{2},$$
where of course $-\Delta\phi_{0}(u) = u^{2}$ in $\mathbb R^{3}$.
% and
%\begin{equation}\label{eq:I0Ie}
%\textcolor{red}{I_{0}(u) \leq I_{\varepsilon}(u)?}
%\end{equation}

\begin{remark}
We observe that in \cite{BK} it has been proved by hands that $J_{\lambda,\varepsilon}$
is $C^{1}$ and that its critical points are solutions of \eqref{eq:equation}.
Of course the nontrivial part there was to show that $I_{\varepsilon}$ is $C^{1}$
with Frech\'et derivative $I_{\varepsilon}'(u)$ given by
$$\forall v\in H^{1}(\mathbb R^{3}): I_{\varepsilon}'(u)[v] =\int_{\mathbb R^{3}} \phi_{\varepsilon}(u) u v.$$
In other words,  it gives rise exactly to the nonlocal term $\phi_{\varepsilon}(u)u$ in the equation \eqref{eq:equation}.
See \cite[Proposition 4.1]{BK}.
\end{remark}

%\begin{proposition}{\cite[Proposition 4.1]{BK}}\label{prop:I}
%For all $\varepsilon\geq0$
%the functional $I_{\varepsilon}$
%$$I:u\in H^{1}(\mathbb R^{N})\longmapsto \dis\frac{1}{4}\int|\nabla \phi_{u}|^{2} +  \frac{\textcolor{red}{3}}{8}\int |\nabla \phi_{u}|^4\in \mathbb R$$ 
%is $C^{1}$ and for all $u\in H^{1}(\mathbb R^{3})$ we have
%\end{proposition}

\begin{remark}\label{rem:derivada}
A useful consequence of the differentiability of $I_{\varepsilon}$ 
is that, if $v\in H^{1}(\mathbb R^{3})$ is fixed,
then the function $t\in(0,\infty)\mapsto I_{\varepsilon}(tv)$ is $C^{1}$ with
\begin{equation*}%\label{eq:}
\frac{d}{dt} I_{\varepsilon}(tv) = I_{\varepsilon}'(tv)[v]= t\int_{\mathbb R^{3}} \phi_{\varepsilon}(tv) v^{2}.
\end{equation*}
\end{remark}

\medskip

In view of the above arguments, we are reduced to find a solution $u_{\lambda,\varepsilon}$
of equation \eqref{eq:equation}, that is a critical point of the functional $J_{\lambda,\varepsilon}$.

\section{The truncated functional}\label{sec:trunc}

In order to overcome the lack of compactness and the ``growth'' of order $4$ in $I_{\varepsilon}$, let us define a truncation for the functional $J_{\lambda,\varepsilon}$ in the following way. Consider a smooth cut-off function $\psi:[0,+\infty)\to \mathbb R_{+}$ such that
$$
\left\{\begin{array}{lll} 
\psi(t) = 1, & t\in[0,1],\smallskip\\
0 \leq \psi(t) \leq 1, & t\in(1,2),\smallskip\\
\psi(t) = 0, &t\in[2,\infty),\smallskip\\
|\psi'|_{\infty}\leq 2.
\end{array}\right.
$$
For each $T>0$ we define  $h_T(u) := \psi\left({\|u\|_{H^{1}}^2}/{T^2}\right)$ and the  truncated functional 
$J_{\lambda,\varepsilon}^{T}:H^{1}(\mathbb{R}^{3})\rightarrow\mathbb{R}$ given by 
\begin{eqnarray*}%\eqref{eq:Jtrunc}
J_{\lambda,\varepsilon}^{T}(u) &:= &\frac 12 \|u\|_{H^{1}}^{2}+ 
h_{T}(u)\biggl[\frac 14 \int_{\mathbb{R}^{3}}|\nabla \phi_{\varepsilon}(u)|^{2}  
+ \frac{3\varepsilon^{4}}{8} \int_{\mathbb{R}^{3}}|\nabla \phi_{\varepsilon}(u)|^{4} \biggl] 
- \lambda\displaystyle\int_{\mathbb{R}^{3}}F(x,u) - \frac{1}{2^{*}}\displaystyle\int_{\mathbb{R}^{3}}|u|^{2^{*}}\\
&=&\frac 12 \|u\|_{H^{1}}^{2}+  h_{T}(u)I_{\varepsilon}(u)- \lambda\displaystyle\int_{\mathbb{R}^{3}}F(x,u) 
- \frac{1}{2^{*}}\displaystyle\int_{\mathbb{R}^{3}}|u|^{2^{*}}.
\end{eqnarray*}
The functional $J_{\lambda,\varepsilon}^{T}$ is $C^{1}$ with differential given, for all $u,v \in H^{1}(\mathbb{R}^{3})$, by
\begin{multline}\label{eq:derivada}
(J_{\lambda,\varepsilon}^{T})'(u)[v]=\langle u, v\rangle_{H^{1}}
%\int \nabla u\nabla v+\int uv
+\frac{2}{T^{2}}\psi'\left(\frac{\|u\|_{H^{1}}^{2}}{T^{2}}\right)\langle u, v\rangle_{H^{1}} I_{\varepsilon}(u)\\
%\Big[\frac 14 \int_{\mathbb{R}^{N}}|\nabla \phi_u|^{2} dx + \frac 38 \int_{\mathbb{R}^{N}}|\nabla \phi_u|^{4} dx\Big]
 +h_{T}(u)\int_{\mathbb R^{3}} \phi_{\varepsilon}(u) u v 
-\lambda\int_{\mathbb R^{3}} f(x,u)v  - \int |u|^{2^{*}-1}v.
\end{multline}
Then  $u_{\lambda,\varepsilon}\in H^{1}(\mathbb R^{3})$ is a critical point of $J_{\lambda,\varepsilon}^{T}$,
if and only if  the pair 
$(u_{\lambda,\varepsilon},\phi_{\varepsilon}(u_{\lambda,\varepsilon}))\in H^{1}(\mathbb R^{3})\times X$ is a weak solution of
\begin{equation*}%\label{eq:truncada}
\left\{
\begin{array}[c]{ll}
(- \Delta u +u)\left(1+\dis\frac{2}{T^{2}}\psi'\left(\frac{\|u\|_{H^{1}}^{2}}{T^{2}}\right)I_{\varepsilon}(u) \right)+h_{T}(u)\phi u = \lambda f(x,u)+|u|^{2^{*}-2}u & \ \mbox{in} \ \
\mathbb{R}^{3}, \ \ \\ -\Delta \phi - \varepsilon^{4}\Delta_4 \phi = u^{2}& \  \mbox{in} \ \ \mathbb{R}^{3}.
\end{array}
 \right.
\end{equation*}
Let us observe the following
\begin{lemma}\label{lem:proiect}
Let $T,\lambda, \varepsilon>0$ be fixed.
For every $v\in H^{1}(\mathbb R^{3})\setminus\{0\}$, the function
$$t\in[0,+\infty) \mapsto J_{\lambda,\varepsilon}^{T}(tv) \in\mathbb R$$
has a global maximum  point  which does not depend on $\varepsilon$ and is 
 strictly positive. It will be denoted hereafter with $t_{\lambda}^{T}(v)$.
Moreover, 
$$\forall T>0: \ 
 \lim_{\lambda\to+\infty} {t_{\lambda}^{T}(v)}=0.$$
%\textcolor{red}{no sentido que existe uma sequencia $\lambda_{n}\to\infty$ tal que }
%$$\lim_{n\to+\infty}t_{\lambda_{n}}^{T}(v)=0$$
\end{lemma}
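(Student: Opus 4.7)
My plan is to study $g(t) := J_{\lambda,\varepsilon}^{T}(tv)$ on $[0,\infty)$, exploiting the trichotomy induced by the cut-off: whenever $t^{2}\|v\|_{H^{1}}^{2}\geq 2T^{2}$ we have $h_{T}(tv)=0$, so $g$ reduces there to the $\varepsilon$-independent function
\[
\Psi(t) := \tfrac{1}{2}t^{2}\|v\|_{H^{1}}^{2} - \lambda\int_{\mathbb{R}^{3}} F(x,tv) - \tfrac{t^{2^{*}}}{2^{*}}\int_{\mathbb{R}^{3}}|v|^{2^{*}}.
\]
To establish a positive global maximum, note first that $g(0)=0$; for small $t>0$ I would invoke \eqref{f_{1}}--\eqref{f_{2}} to bound $F(x,tv)\leq \eta(tv)^{2}+C_{\eta}(tv)^{q}$ with $q\in(2,2^{*})$ and $\eta$ chosen so that $\lambda\eta$ is absorbed into $\tfrac{1}{4}t^{2}\|v\|_{H^{1}}^{2}$; using $h_{T}I_{\varepsilon}\geq 0$, this yields $g(t)\geq \tfrac{1}{4}t^{2}\|v\|_{H^{1}}^{2}-Ct^{q}-C't^{2^{*}}>0$. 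For large $t$ we are in the outer range and $g(t)=\Psi(t)$; since $q<2^{*}$, the critical term dominates and $g(t)\to -\infty$. Continuity then yields a positive global maximum.

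For the $\varepsilon$-independence, I would designate $t_{\lambda}^{T}(v)$ as a global maximizer of the manifestly $\varepsilon$-independent function $\Psi$ (whose existence follows by the same argument just applied to $g$) and verify that it is also a global maximizer of $g$. The inequality $\sup g\geq \sup\Psi$ is automatic from $g\geq\Psi$ and the outer-range identification; the reverse inequality is the \emph{main obstacle}, since in the inner and transition ranges the nonnegative surplus $h_{T}(tv)I_{\varepsilon}(tv)$ is $\varepsilon$-dependent and need not vanish. To control it, I would exploit the uniform bound $I_{\varepsilon}(u)\leq C\|u\|_{H^{1}}^{4}$ (obtained from the energy identity \eqref{eq:sostituicao} via H\"older and Sobolev embedding), which confines $h_{T}(tv)I_{\varepsilon}(tv)\leq 4CT^{4}$ throughout the truncation range, and carry out a pointwise comparison against $\Psi(t_{\lambda}^{T}(v))$, leveraging the specific scale $T$ of the truncation.

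Finally, for $\lim_{\lambda\to+\infty} t_{\lambda}^{T}(v)=0$, I would argue by contradiction: if $t_{\lambda_{n}}^{T}(v)\geq \delta>0$ along $\lambda_{n}\to+\infty$, then Fermat's identity $\Psi'(t_{\lambda_{n}}^{T}(v))=0$ gives
\[
t_{\lambda_{n}}^{T}(v)\|v\|_{H^{1}}^{2} = \lambda_{n}\int_{\mathbb{R}^{3}} f(x,t_{\lambda_{n}}^{T}(v) v)\,v + \bigl(t_{\lambda_{n}}^{T}(v)\bigr)^{2^{*}-1}\int_{\mathbb{R}^{3}}|v|^{2^{*}}.
\]
The critical term rules out $t_{\lambda_{n}}^{T}(v)\to +\infty$ (otherwise $(t_{\lambda_{n}}^{T}(v))^{2^{*}-2}$ would need to remain bounded by a fixed constant), so up to a subsequence $t_{\lambda_{n}}^{T}(v)\in[\delta,M]$; on this interval the left-hand side is uniformly bounded while, using \eqref{f_{0}}--\eqref{f_{3}} together with the fact that the positive part of $v$ is nontrivial on a set of positive measure, $\int f(x,t_{\lambda_{n}}^{T}(v) v) v \geq c(\delta)>0$, so the right-hand side blows up with $\lambda_{n}$, a contradiction.
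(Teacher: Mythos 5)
Your first step (positivity of $J^{T}_{\lambda,\varepsilon}(tv)$ for small $t$ after absorbing $\lambda\eta$ into the quadratic term, and $J^{T}_{\lambda,\varepsilon}(tv)\to-\infty$ because $h_{T}$ kills $I_{\varepsilon}$ for large $t$) is exactly the paper's argument, and your contradiction scheme for the limit (exclude $t_{\lambda}\to+\infty$ via the critical term, then exclude a positive limit point because $\lambda\int_{\mathbb R^{3}} f(x,t_{\lambda}v)\,v$ stays bounded away from zero while the other side of the Fermat identity remains bounded) is also structurally the paper's. The genuine gap is in the middle step, the $\varepsilon$-independence. You propose to \emph{define} $t^{T}_{\lambda}(v)$ as a global maximizer of the $\varepsilon$-free function $\Psi$ and then to show it also maximizes $g(t):=J^{T}_{\lambda,\varepsilon}(tv)$, i.e.\ to prove $\sup_{t}g\leq\sup_{t}\Psi$. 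That inequality is false precisely in the regime that matters. For $\lambda$ large one has $\sup_{t}\Psi>0$, while on the outer range $t^{2}\|v\|_{H^{1}}^{2}\geq 2T^{2}$ condition \eqref{f_{3}} forces $\Psi(t)\leq\frac{t^{2}}{2}\|v\|_{H^{1}}^{2}-\lambda t^{\theta}\int_{\mathbb R^{3}}F(x,v)<0$ (for $v$ with nontrivial positive part, the only case in which the final limit can hold and the case used later in the paper); hence any maximizer $t_{0}$ of $\Psi$ satisfies $t_{0}^{2}\|v\|_{H^{1}}^{2}<2T^{2}$, so $h_{T}(t_{0}v)>0$ and
$$\sup_{t}g\;\geq\; g(t_{0})\;=\;\Psi(t_{0})+h_{T}(t_{0}v)\,I_{\varepsilon}(t_{0}v)\;>\;\Psi(t_{0})\;=\;\sup_{t}\Psi,$$
since $I_{\varepsilon}(t_{0}v)>0$ whenever $t_{0}v\neq0$. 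So a maximizer of $\Psi$ is in general \emph{not} a maximizer of $g$. Your uniform bound $I_{\varepsilon}(u)\leq C\|u\|_{H^{1}}^{4}$ is correct (it follows from \eqref{eq:sostituicao}, H\"older and Sobolev, uniformly in $\varepsilon$), but it only bounds the surplus; it cannot make a strictly positive, $\varepsilon$-dependent surplus irrelevant to the \emph{location} of the maximum. Consequently the Fermat identity you invoke in the last step, $\Psi'(t_{\lambda})=0$, is the derivative of the wrong function, and the limit argument is carried out for a point that need not be the maximizer in the statement.

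For comparison, the paper defines $t^{T}_{\lambda}(v)$ as a maximizer of $g$ itself and differentiates $J^{T}_{\lambda,\varepsilon}$, so its Fermat identity carries the extra terms $\frac{2t^{2}}{T^{2}}\psi'\bigl(t^{2}\|v\|_{H^{1}}^{2}/T^{2}\bigr)I_{\varepsilon}(tv)$ and $t^{2}\psi\bigl(t^{2}\|v\|_{H^{1}}^{2}/T^{2}\bigr)\int_{\mathbb R^{3}}\phi_{\varepsilon}(tv)v^{2}$; these are handled using $\psi'\leq0$, condition \eqref{f_{3}}, the vanishing of $\psi$ beyond $\sqrt2\,T$, and the $L^{\infty}$-continuity of $u\mapsto\phi_{\varepsilon}(u)$ from Lemma \ref{lem:facile}. (Admittedly, the paper's own justification of the $\varepsilon$-independence is thin — it only observes that the estimates used in the existence step are $\varepsilon$-free; the robust repair, if what one really needs is uniformity in $\varepsilon$, is to run the paper's Fermat-identity argument with bounds uniform in $\varepsilon$, such as $\int_{\mathbb R^{3}}\phi_{\varepsilon}(tv)v^{2}\leq Ct^{2}|v|_{12/5}^{4}$, rather than to transplant the maximizer onto $\Psi$.)
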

\begin{proof}
First of all let us see the existence of such 
$t_{\lambda}^{T}(v)$ for every $T,\lambda,\varepsilon>0$. 
%The fact that it does not depend s on $\varepsilon$ follows since the terms envolving 

It follows from  \eqref{f_{1}} and \eqref{f_{2}}
that, for each $\eta>0$, there exists a positive constant
$C(\eta)$ such that
\begin{eqnarray}\label{ref2}
F(x,t)\leq \eta \frac{1}{2}|t|^{2} +
\frac{1}{q}C(\eta)|t|^{q}.
\end{eqnarray}
Then, fixed $v\neq0$,
\begin{eqnarray*}\label{eq:}
J_{\lambda,\varepsilon}^{T}(tv)&=& \frac{t^{2}}{2} \|v\|_{H^{1}}^{2}+ 
h_{T}(tv) I_{\varepsilon}(tv)
%\biggl[\frac 14 \int_{\mathbb{R}^{N}}|\nabla \phi_u|^{2} dx + \frac 38 \int_{\mathbb{R}^{N}}|\nabla \phi_u|^{4} dx\biggl] 
- \lambda\displaystyle\int_{\mathbb{R}^{3}}F(x,tv) 
- \frac{t^{2^{*}}}{2^{*}}\displaystyle\int_{\mathbb{R}^{3}}|v|^{2^{*}} \\
&\geq&\frac{t^{2}}{2} \|v\|_{H^{1}}^{2}- \lambda\displaystyle\int_{\mathbb{R}^{3}}F(x,tv) 
- \frac{t^{2^{*}}}{2^{*}}\displaystyle\int_{\mathbb{R}^{3}}|v|^{2^{*}} 
\end{eqnarray*}
and choosing $\eta$ sufficiently small and using the Sobolev embeddings, we have
$$J_{\lambda,\varepsilon}^{T}(tv)\geq \frac{t^{2}}{2}(1-C_{1}\eta \lambda) \|v\|_{H^{1}}^{2} - t^{q}C_{2}C(\eta)\lambda\|v\|_{H^{1}}^{q}- t^{2^{*}} C_{3}\|v\|_{H^{1}}^{2^{*}}>0 \quad \text{for small } t.$$
Here  $C_{i}, i=1,2,3$ are  the embedding constant of $H^{1}(\mathbb R^{3})$, respectively,
 into $L^{2}(\mathbb  R^{3}),L^{q}(\mathbb  R^{3})$ and $L^{2^{*}}(\mathbb  R^{3}).$ 
%Then for $t$ small the functional $J_{\lambda,\varepsilon}^{T}$ is positive.
In the previous inequalities the dependence on $\varepsilon$ disappeared since
the term involving $\varepsilon$ was thrown away being positive.

On the other hand it is easily seen that 
$$\lim_{t\to+\infty}J_{\lambda,\varepsilon}^{T}(tv)= \lim_{t\to+\infty}
\left(\frac{t^{2}}{2} \|v\|_{H^{1}}^{2}+ 
h_{T}(tv) I_{\varepsilon}(tv)
- \lambda\displaystyle\int_{\mathbb{R}^{3}}F(x,tv) 
- \frac{t^{2^{*}}}{2^{*}}\displaystyle\int_{\mathbb{R}^{3}}|v|^{2^{*}} \right)
=-\infty$$
and again we  avoid the dependence on $\varepsilon$ since for $s$ large $h_{T}(s)=0$.
Then the existence of $t_{\lambda}^{T}(v)>0$, which does not depend on $\varepsilon$, is guaranteed.

Now let us prove the limit.
We set for brevity   $t_{\lambda}:=t_{\lambda}^{T}(v)$. Such a $t_{\lambda}$ satisfies (recall Remark \ref{rem:derivada}):
\begin{equation}\label{eq:converg}
t_{\lambda}^{2} +\frac{2t_{\lambda}^{2}}{T^{2}}\psi'\biggl(\frac{t_{\lambda}^{2} }{T^{2}}\biggl) I_{\varepsilon}(t_{\lambda}v) +
t_{\lambda}^{2} \psi\left(\frac{t_{\lambda}^{2}}{T^{2}}\right)\int_{\mathbb R^{3}} \phi_{\varepsilon}(t_{\lambda} v)v^{2}   
 =\\
\lambda \int_{\mathbb{R}^{3}} f(x,t_{\lambda} v)t_{\lambda }v +t_{\lambda}^{2^{*}}
\int_{\mathbb{R}^{3}}|v|^{2^{*}}
\end{equation}
and then, by  \eqref{f_{3}},
$$t_{\lambda}^{2}+t_{\lambda}^{2} \psi\left(\frac{t_{\lambda}^{2}}{T^{2}}\right)\int_{\mathbb R^{3}} \phi_{\varepsilon}
(t_{\lambda} v)v^{2}   
 \geq t_{\lambda}^{2^{*}}\int_{\mathbb{R}^{3}}|v|^{2^{*}}.
$$
If it were $\lim_{\lambda\to +\infty}t_\lambda = +\infty$,  then, there exists $\widetilde\lambda>0$
such that for every $\lambda\geq\widetilde\lambda$ it is $t_{\lambda}>\sqrt2 T$ and then the previous
inequality becomes
$$t_{\lambda}^{2} \geq t_{\lambda}^{2^{*}}\int_{\mathbb{R}^{3}}|v|^{2^{*}}.
$$
But this is impossible for $\lambda$ large.
 Thus, $\lim_{\lambda\to+\infty}t_{\lambda}=\beta\ge0$.
%  there exists a
%sequence $\lambda_{n} \to +\infty$ and $\beta \geq 0$ such that
%$ t_{\lambda_{n}}\rightarrow \beta$ as $n \to +\infty$.
Of course we need to show that $\beta=0.$

If $\beta>0$,
% we fix $T<{\beta}/{\sqrt{2}}$ \textcolor{red}{(or $T>\beta$)} Then
 coming back to \eqref{eq:converg},  and recalling that by Lemma \ref{lem:facile}
 it is $\phi_{\varepsilon}(t_{\lambda} v) \to \phi_{\varepsilon}(\beta v)$ in $L^{\infty}(\mathbb R^{3})$
 as $\lambda\to+\infty$,
 we deduce
%\begin{eqnarray}\label{eq:}
% +\frac{2t_{\lambda_{n}}^{2}}{T^{2}}\psi'\biggl(\frac{t_{\lambda_{n}}^{2} }{T^{2}}\biggl) I(t_{\lambda_{n}}v)  \leq0
%\end{eqnarray}
%$$\phi_{t_{\lambda_{n}}v}= \Phi(t_{\lambda_{n}}v) \longrightarrow \Phi(t_{\beta} v) = \phi_{\beta v}$$
\begin{eqnarray*}
+\infty \longleftarrow  \lambda\dis\int_{\mathbb R^{3}}f(x,t_{\lambda}v)t_{\lambda}v
+t_{\lambda}^{2^{*}}\dis\int_{\mathbb R^{3}}|v|^{2^{*}}   \leq \beta^{2}+\beta^{2}|\psi|_{\infty} M
+o_{n}(1) 
\quad \text{ as } \lambda\to +\infty.
\end{eqnarray*}
%\textcolor{red}{(or
%$$\beta+o_{n}(1)+(\beta+o_{n}(1))\frac{d}{dt}I(t v)_{|t=t_{\lambda_{n}}}=
%\lambda_{n}\dis\int_{\Omega}f(x,t_{\lambda_{n}}v)t_{\lambda_{n}}v
%\ dx +\beta^{2^{*}}\dis\int_{\Omega}v^{2^{*}} +o_{n}(1)\longrightarrow+\infty)$$
%}
which is an absurd. Thus we conclude that $\beta=0$. This means that for every fixed
$T>0$ it is $\lim_{\lambda\to+\infty}t^{T}_{\lambda}(v)=0$, completing the proof.
%\textcolor{red}{Ceckar isso: The fact that}, for fixed $T>0$, the limit is uniform in $\varepsilon$ is straightforward:  basically
%can be done as for the ``point-wise'' case, by observing that the term involving
% $\varepsilon$ are positive and so were  thrown away.
\end{proof}

%
%
%
%The proof of Theorem \ref{teorema1} is based on a careful study of
%solution of the following auxiliary problem
%$$
%- \Delta u +u+h_{T}(u)\phi_u u = \lambda f(x,u)+|u|^{2^{*}-2}u \ \mbox{in} \ \
%\mathbb{R}^{N}, 
%\leqno{(T_{\lambda})}
%$$
%where $f,N,\lambda$ are as in the introduction.

\subsection{The Mountain Pass Geometry for $J_{\lambda,\varepsilon}^{T}$}

In the sequel, we prove that the functional $
J_{\lambda,\varepsilon}^{T}$ has the
Mountain Pass Geometry
with some kind of uniformity with respect to the parameters.
Observe that even  $\varepsilon=0$ is allowed,
up to change $\phi_{\varepsilon}$ into $\phi_{0}$, the solution of the Poisson equation,
and the function space $X$ into $D^{1,2}(\mathbb R^{3})$.

\begin{lemma}\label{geometria1}
Assume that conditions  \eqref{f_{1}} and \eqref{f_{2}} hold.
Then, for every $\lambda>0$ there exists
 numbers $\rho_{\lambda},\alpha_{\lambda}>0 $ such that,
$$
\forall T>0,\ \forall \varepsilon\ge0: \quad 
J_{\lambda,\varepsilon}^{T}(u)\geq \alpha_{\lambda},\quad  \text{whenever }\
\|u\|_{H^{1}}=\rho_{\lambda}.
$$
\end{lemma}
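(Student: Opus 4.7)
The plan is to exploit the fact that the truncated term $h_T(u)I_\varepsilon(u)$ is nonnegative, so it can simply be discarded from the lower bound. By construction $\psi \geq 0$, and both $\int|\nabla\phi_\varepsilon(u)|^2$ and $\int|\nabla\phi_\varepsilon(u)|^4$ are nonnegative, so $h_T(u) I_\varepsilon(u) \geq 0$. This gives
\[
J_{\lambda,\varepsilon}^{T}(u) \;\geq\; \tfrac{1}{2}\|u\|_{H^1}^{2} - \lambda\int_{\mathbb{R}^3} F(x,u) - \tfrac{1}{2^*}\int_{\mathbb{R}^3} |u|^{2^*},
\]
and the right-hand side no longer depends on $T$ or $\varepsilon$, which is the source of the uniformity claimed in the statement.

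Next, I would invoke the standard consequence of \eqref{f_{1}}--\eqref{f_{2}}, already recorded in the paper as (ref2): for every $\eta>0$ there exists $C(\eta)>0$ with $F(x,t) \leq \tfrac{\eta}{2}|t|^2 + \tfrac{C(\eta)}{q}|t|^q$ for all $(x,t)$. Combined with the continuous Sobolev embeddings $H^1(\mathbb{R}^3)\hookrightarrow L^2, L^q, L^{2^*}$ (with embedding constants $C_1,C_2,C_3$ as introduced in the proof of Lemma~\ref{lem:proiect}), this yields
\[
J_{\lambda,\varepsilon}^{T}(u) \;\geq\; \tfrac{1}{2}\bigl(1 - C_1 \lambda\eta\bigr)\|u\|_{H^1}^{2} \;-\; C_2\,\lambda\, C(\eta)\,\|u\|_{H^1}^{q} \;-\; C_3\,\|u\|_{H^1}^{2^*}.
\]

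Finally, fix $\lambda>0$ and choose $\eta=\eta(\lambda)$ small enough so that $1 - C_1\lambda\eta \geq 1/2$. Since $q>2$ and $2^*=6>2$, the function $r\mapsto \tfrac{1}{4}r^2 - A(\lambda)\,r^{q} - B\,r^{2^*}$ is strictly positive for all sufficiently small $r>0$. Hence there exist $\rho_\lambda, \alpha_\lambda>0$, depending only on $\lambda$, such that $J_{\lambda,\varepsilon}^{T}(u) \geq \alpha_\lambda$ whenever $\|u\|_{H^1}=\rho_\lambda$, uniformly in $T>0$ and $\varepsilon\geq 0$.

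There is essentially no serious obstacle here: the truncation and the $\phi_\varepsilon$-term only help in this small-ball regime because they are nonnegative, so the estimate reduces to the classical Brezis--Nirenberg-type argument for a subcritical-plus-critical functional. The only point worth emphasizing is the sign observation $h_T(u) I_\varepsilon(u)\geq 0$, which is what decouples the mountain-pass threshold $\alpha_\lambda$ from both parameters $T$ and $\varepsilon$ and will later make it possible to pass to the limits $\varepsilon\to 0^+$ and $\lambda\to +\infty$.
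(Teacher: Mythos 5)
Your proof is correct and follows essentially the same route as the paper: discard the nonnegative term $h_T(u)I_\varepsilon(u)$ (which is exactly what makes $\rho_\lambda,\alpha_\lambda$ independent of $T$ and $\varepsilon$), apply the estimate $F(x,t)\leq \tfrac{\eta}{2}|t|^2+\tfrac{C(\eta)}{q}|t|^q$ from \eqref{f_{1}}--\eqref{f_{2}}, use the Sobolev embeddings, and choose $\rho_\lambda$ small using $2<q<2^*$. Your version is in fact slightly more careful than the paper's, since you keep the factor $\lambda$ in front of the quadratic term and choose $\eta=\eta(\lambda)$ accordingly, whereas the paper silently absorbs it.
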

We observe explicitly that indeed  $\rho_{\lambda}, \alpha_{\lambda}$
does not depend on $T$ neither on $\varepsilon$;
indeed in the proof we will simply through away the term  $h_{T}(u)I_{\varepsilon}(u)$,
being positive.
\begin{proof}
Let $\lambda>0$ be fixed.
It follows from  \eqref{f_{1}} and \eqref{f_{2}}
that, for each $\eta>0$, there exists a positive constant
$C(\eta)$ such that
\begin{eqnarray}\label{ref2}
F(x,t)\leq \eta \frac{1}{2}|t|^{2} +
\frac{1}{q}C(\eta)|t|^{q}.
\end{eqnarray}

By (\ref{ref2}) we have
$$
J_{\lambda,\varepsilon}^{T}(u) \geq  \frac{1}{2}\|u\|_{H^{1}}^{2} -
\frac{\eta}{2}\int_{\mathbb{R}^{3}}|u|^{2}
-\frac{1}{q}C(\eta)\lambda\dis\int_{\mathbb{R}^{3}}|u|^{q}-
\dis\frac{1}{2^{*}}\dis\int_{\mathbb{R}^{3}}|u|^{2^{*}}.
$$
So, using the Sobolev Embedding Theorem, there is a positive constant $C>0$ such that
$$
J_{\lambda,\varepsilon}^{T}(u) \geq  C\|u\|_{H^{1}}^{2} -\lambda C\|u\|_{H^{1}}^{q} -
C\|u\|_{H^{1}}^{2^{*}}.
$$
Since $2< q< 2^{*}$, the result follows by choosing $\rho_{\lambda}>0$ small
enough.\end{proof}

\begin{lemma}\label{segundageometria1}
Assume that conditions \eqref{f_{1}}-\eqref{f_{3}} hold. %For all $T>0$ 
Then for every $T>0$, there exists $e_{T} \in
H^{1}(\mathbb{R}^{N})$ such that
$$\forall \lambda>0,\ \forall \varepsilon\ge0: \quad J_{\lambda,\varepsilon}^{T}(e_{T})<0\quad \text{and } \quad \| e_{T}\|_{H^{1}} >\rho_{\lambda},$$
where $\rho_{\lambda}$ is given in Lemma \ref{geometria1}.
\end{lemma}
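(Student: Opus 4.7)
The plan is the standard Mountain Pass second‑geometry argument, adapted to exploit the truncation. Fix once and for all some $v \in H^1(\mathbb R^3)\setminus\{0\}$, for instance a nonnegative compactly supported bump, and look for $e_T$ of the form $t_0 v$ for large $t_0$. The payoff of the truncation $h_T$ is that for any $t > t^* := T\sqrt{2}/\|v\|_{H^1}$ one has $\|tv\|_{H^1}^2/T^2 > 2$, so by the definition of the cut‑off $\psi$ it follows that $h_T(tv)=0$. This is the crucial observation: the entire term $h_T(u)\,I_\varepsilon(u)$, which carries the dependence on $\varepsilon$ (and is responsible for the quartic growth), disappears.

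Consequently, for every $t > t^*$, every $\lambda > 0$ and every $\varepsilon \ge 0$,
\[
J_{\lambda,\varepsilon}^{T}(tv) \;=\; \frac{t^{2}}{2}\|v\|_{H^{1}}^{2} \;-\; \lambda\!\int_{\mathbb R^{3}} F(x,tv) \;-\; \frac{t^{2^{*}}}{2^{*}}\!\int_{\mathbb R^{3}}|v|^{2^{*}}.
\]
From \eqref{f_{0}} and integration of \eqref{f_{3}} we have $F(x,\cdot)\ge 0$, so we may simply drop the $\lambda$‑term, obtaining the upper bound
\[
J_{\lambda,\varepsilon}^{T}(tv) \;\le\; \frac{t^{2}}{2}\|v\|_{H^{1}}^{2} \;-\; \frac{t^{2^{*}}}{2^{*}}\!\int_{\mathbb R^{3}}|v|^{2^{*}},
\]
which is independent of $\lambda$ and $\varepsilon$. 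Since $2^{*}=6>2$ and $\int|v|^{2^{*}}>0$, the right‑hand side tends to $-\infty$ as $t\to +\infty$. Pick $t_0 > t^*$ so large that this upper bound is strictly negative; then $J_{\lambda,\varepsilon}^{T}(t_0 v)<0$ simultaneously for all admissible $\lambda,\varepsilon$.

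It remains to verify $\|t_0 v\|_{H^1} > \rho_\lambda$ for every $\lambda>0$. Inspecting the proof of Lemma \ref{geometria1}, the required $\rho_\lambda$ is only constrained to be sufficiently small — the inequality $C\rho^2-\lambda C\rho^q-C\rho^{2^*}>0$ permits in particular any $\rho_\lambda\le 1$ (after possibly decreasing it further for large $\lambda$), so we may assume $\rho_\lambda\le \rho_0$ for a universal $\rho_0$. Enlarging $t_0$ if necessary so that $t_0\|v\|_{H^1}>\rho_0$, and setting $e_T := t_0 v$, completes the construction. The only genuinely delicate point is the double uniformity in $(\lambda,\varepsilon)$, and this is dispatched by the two independent observations above: the $\varepsilon$‑uniformity comes for free from the cutoff $\psi$ vanishing past $\|u\|^2_{H^1}=2T^2$, and the $\lambda$‑uniformity comes from the sign of $F$.
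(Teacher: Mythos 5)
Your proof is correct and follows essentially the same route as the paper's: take $e_T = t_0 v$ with $t_0$ large enough that $h_T(t_0 v)=0$ kills the $\varepsilon$-dependent term $I_\varepsilon$, drop the $\lambda$-term by positivity of $F$ (which the paper gets from \eqref{f_{3}}), and let the critical term $-\frac{t^{2^*}}{2^*}\int|v|^{2^*}$ dominate $\frac{t^2}{2}\|v\|_{H^1}^2$. Your explicit treatment of $\|e_T\|_{H^1}>\rho_\lambda$ (noting $\rho_\lambda$ may be taken uniformly bounded since Lemma \ref{geometria1} only constrains it from above) is a point the paper leaves implicit, but it is the same argument.
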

Here the fact that $e_{T}$ does not depends on $\varepsilon$ is a consequence of the 
fact that the with the truncation we kill the term $I_{\varepsilon}$.
\begin{proof}
Let $T>0$ be fixed. Let now $v\in C^{\infty}_{0}(\mathbb{R}^{N})$, positive, with $\|v\|_{H^{1}}=1$.
Using \eqref{f_{3}} and considering $t>2T$,  we get
$$
J_{\lambda,\varepsilon}^{T}(tv)\leq \frac 12
t^{2}-\lambda t^{\theta}\int_{\mathbb R^{3}} v^{\theta} -
\frac{t^{2^{*}}}{2^{*}}\int_{\mathbb R^{3}} v^{2^{*}}< 
\frac 12
t^{2} - \frac{t^{2^{*}}}{2^{*}}\int_{\mathbb R^{3}} v^{2^{*}}
$$
Since $2< \theta$,  the result follows by choosing some 
$t_{*}>2T$ large enough and setting
$e_{T}:=t_{*}v$.
\end{proof}

We recall that a sequence $\{u_{n}\}\subset H^{1}(\mathbb{R}^{3})$ is a
Palais-Smale sequence for the functional $J_{\lambda,\varepsilon}^{T}$ at the
level $d \in \mathbb{R}$ if
$$
J_{\lambda,\varepsilon}^{T}(u_{n})\rightarrow d \quad \mbox{and } \quad
(J_{\lambda,\varepsilon}^{T})'(u_{n})\rightarrow 0 \ \mbox{in} \ H^{-1}(\mathbb{R}^{3}).
$$
If every Palais-Smale sequence of $
J_{\lambda,\varepsilon}^{T}$ has a strong
convergent subsequence, then one says that $
J_{\lambda,\varepsilon}^{T}$ satisfies
the Palais-Smale condition, or $(PS)$ for short.

Then, since %for every $T,\lambda>0,\varepsilon\geq0$
 the functional $J_{\lambda,\varepsilon}^{T}$ satisfies the geometric assumptions of 
Mountain Pass Theorem  (see \cite{Ambrosetti}), we know that
(see  \cite[p.12]{Willem}), for every $T,\lambda>0,\varepsilon\geq0$ there exists a sequence $\{u_{n}\}\subset
H^{1}(\mathbb{R}^{3})$  satisfying
$$
J_{\lambda,\varepsilon}^{T}(u_{n})\rightarrow c^{T}_{\lambda,\varepsilon}>0 \ \ \mbox{and} \ \
(J_{\lambda,\varepsilon}^{T})'(u_{n})\rightarrow 0,
$$
where
$$
c^{T}_{\lambda,\varepsilon} := \dis\inf_{\gamma \in \Gamma^{T}_{\lambda,\varepsilon}} \dis\max_{t \in [0,1]}
J_{\lambda,\varepsilon}^{T}(\gamma(t))>0
$$
and
$$
\Gamma^{T}_{\lambda,\varepsilon} := \{ \gamma \in C([0,1],H^{1}(\mathbb R^{3})) : \gamma(0)=0,
~J_{\lambda,\varepsilon}^{T}(\gamma(1)) < 0\}.
$$
It is clear that this sequence should depend also on $T,\lambda,\varepsilon$
but we omit this for simplicity.
In other words, $\{u_{n}\}$ is a $(PS)$ sequence at level $c_{\lambda,\varepsilon}^{T}$
for the functional $J_{\lambda,\varepsilon}^{T}$.

Observe that, since $e_{T}$ found in Lemma \ref{segundageometria1}
%in contrast to Lemma \ref{geometria1},
 does not depends on  $\lambda$ neighter on $\varepsilon$,  by defining the path
\begin{equation}\label{eq:gammastar}
\gamma_{*}: t\in[0,1]\mapsto t e_{T}\in H^{1}(\mathbb R^{3})
\end{equation}
we get 
$\gamma_{*}\in\bigcap_{\lambda>0,\varepsilon\geq0}\Gamma_{\lambda,\varepsilon}^{T}$.
%\begin{remark}
%\textcolor{red}{(talvez tirar, e' repetitivo)}
%We notice explicitly   that all we have done also holds for $\varepsilon=0$, then we have
%$$c_{\lambda,0}^{T} >0.$$
%
%\end{remark}

\begin{remark}\label{rem:epsilon}
Observe that the Mountain Pass structure
of $J_{\lambda,\varepsilon}^{T}$ does not depend on $\varepsilon\geq0$.
\end{remark}

\subsection{Estimates of $c_{\lambda,\varepsilon}^{T}$}
Here we study the behaviour of  the Mountain Pass levels 
$c_{\lambda,\varepsilon}^{T}$ with respect to $\lambda$,
whenever $T,\varepsilon$ are fixed.
This will be fundamental in order to prove that the $(PS)$
sequences at level $c_{\lambda,\varepsilon}^{T}$ are bounded
for $\lambda$ large.

Actually the next result is again independent on $\varepsilon\geq0$,
which is not surprising in view of Remark \ref{rem:epsilon}.
\begin{lemma}\label{nivel}
If the conditions \eqref{f_{0}}-\eqref{f_{3}} hold, then 
$$
\forall T>0 : \  \lim_{\lambda\rightarrow +\infty}\sup_{\varepsilon\geq0}c^{T}_{\lambda,\varepsilon}=0.
$$
\end{lemma}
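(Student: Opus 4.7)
The plan is to bound $c_{\lambda,\varepsilon}^{T}$ from above along a single path that is simultaneously admissible for every $\lambda>0$ and every $\varepsilon\ge 0$, and then show that the maximum of $J_{\lambda,\varepsilon}^{T}$ along that path vanishes as $\lambda\to+\infty$ uniformly in $\varepsilon$. The natural candidate is the straight path $\gamma_{*}(t)=t\,e_{T}$ from \eqref{eq:gammastar}, which by construction belongs to $\bigcap_{\lambda,\varepsilon}\Gamma_{\lambda,\varepsilon}^{T}$. Thus
\[
c_{\lambda,\varepsilon}^{T}\ \le\ \max_{t\in[0,1]} J_{\lambda,\varepsilon}^{T}(t e_{T}).
\]

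Next, I would invoke Lemma~\ref{lem:proiect} with $v=e_{T}$: the function $t\mapsto J_{\lambda,\varepsilon}^{T}(te_{T})$ attains a unique global maximum at some $t_{\lambda}:=t_{\lambda}^{T}(e_{T})>0$ that is independent of $\varepsilon$ and satisfies $t_{\lambda}\to 0$ as $\lambda\to+\infty$. Since $J_{\lambda,\varepsilon}^{T}(e_{T})<0$ while $c_{\lambda,\varepsilon}^{T}>0$, we must have $t_{\lambda}\in(0,1)$ for $\lambda$ large, so the maximum over $[0,1]$ coincides with the global maximum, giving
\[
c_{\lambda,\varepsilon}^{T}\ \le\ J_{\lambda,\varepsilon}^{T}(t_{\lambda}e_{T})
\quad\text{for all $\lambda$ large and every $\varepsilon\ge 0$.}
\]

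The main technical point is a uniform-in-$\varepsilon$ bound $I_{\varepsilon}(u)\le C\|u\|_{H^{1}}^{4}$, where $C$ depends only on the Sobolev constant. To obtain this, I would test the quasilinear Poisson equation \eqref{eq:2eq} with the solution itself, obtaining via \eqref{eq:sostituicao}
\[
|\nabla\phi_{\varepsilon}(u)|_{2}^{2}+\varepsilon^{4}|\nabla\phi_{\varepsilon}(u)|_{4}^{4}
=\int_{\mathbb{R}^{3}}\phi_{\varepsilon}(u)u^{2}
\le |\phi_{\varepsilon}(u)|_{6}\,|u|_{12/5}^{2}
\le C\,|\nabla\phi_{\varepsilon}(u)|_{2}\,\|u\|_{H^{1}}^{2}.
\]
Since both terms on the left are non-negative, this yields $|\nabla\phi_{\varepsilon}(u)|_{2}\le C\|u\|_{H^{1}}^{2}$ and $\varepsilon^{4}|\nabla\phi_{\varepsilon}(u)|_{4}^{4}\le C\|u\|_{H^{1}}^{4}$, whence $I_{\varepsilon}(u)\le C\|u\|_{H^{1}}^{4}$ with $C$ independent of $\varepsilon\ge 0$.

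Combining the three ingredients, using $h_{T}\le 1$, $F(\cdot,t_{\lambda}e_{T})\ge 0$ (from \eqref{f_{3}}) and $|e_{T}|_{2^{*}}^{2^{*}}\ge 0$, I estimate
\[
0\ <\ c_{\lambda,\varepsilon}^{T}\ \le\ J_{\lambda,\varepsilon}^{T}(t_{\lambda}e_{T})
\ \le\ \frac{t_{\lambda}^{2}}{2}\|e_{T}\|_{H^{1}}^{2}+I_{\varepsilon}(t_{\lambda}e_{T})
\ \le\ \frac{t_{\lambda}^{2}}{2}\|e_{T}\|_{H^{1}}^{2}+C\,t_{\lambda}^{4}\|e_{T}\|_{H^{1}}^{4}.
\]
The right-hand side is independent of $\varepsilon$ and tends to $0$ as $\lambda\to+\infty$ because $t_{\lambda}\to 0$, which gives the claim. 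The principal obstacle is the $\varepsilon$-uniform control of $I_{\varepsilon}$ — the $\varepsilon^{4}|\nabla\phi|^{4}$ piece could a priori blow up or degenerate with $\varepsilon$ — but the duality test above handles both extremes in one stroke; the rest is a direct application of Lemma~\ref{lem:proiect} to a single, $\lambda,\varepsilon$-independent path.
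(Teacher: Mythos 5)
Your proof is correct and follows essentially the same route as the paper: both arguments bound $c^{T}_{\lambda,\varepsilon}$ by the maximum of $J^{T}_{\lambda,\varepsilon}$ along the fixed, $(\lambda,\varepsilon)$-independent path \eqref{eq:gammastar}, identify that maximum with the value at the point $t^{T}_{\lambda}$ given by Lemma \ref{lem:proiect}, and conclude from $t^{T}_{\lambda}\to 0$. The one genuine difference lies in how the uniformity in $\varepsilon\geq 0$ is justified: the paper simply asserts, ``due to the continuity of the maps $h_{T}$ and $I_{\varepsilon}$'', that $h_{T}(t^{T}_{\lambda}v)\to 1$ and $I_{\varepsilon}(t^{T}_{\lambda}v)\to 0$ uniformly in $\varepsilon$ --- but continuity of each fixed $I_{\varepsilon}$ by itself does not yield uniformity --- whereas you prove the quantitative bound $I_{\varepsilon}(u)\leq C\|u\|_{H^{1}}^{4}$ with $C$ independent of $\varepsilon$, obtained by testing \eqref{eq:2eq} with $\phi_{\varepsilon}(u)$ (i.e.\ using \eqref{eq:sostituicao}) together with H\"older and the Sobolev embedding $D^{1,2}(\mathbb R^{3})\hookrightarrow L^{6}(\mathbb R^{3})$. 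That estimate is precisely what makes the uniform-in-$\varepsilon$ vanishing rigorous, so your write-up is, if anything, more complete than the paper's at the only delicate point of the argument; the remaining steps (admissibility of the path via Lemma \ref{segundageometria1}, the location of the maximizer in $(0,1)$ for large $\lambda$, discarding the nonnegative terms $\lambda\int F$ and $\frac{1}{2^{*}}\int|u|^{2^{*}}$) match the paper's proof.
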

\begin{proof}
%Since the functional $J_{\lambda}^{T}$ has
%the Mountain Pass geometry, it follows that, 
Let $T>0$ be fixed. We prove that 
for every $\eta>0$ there exists $\widetilde\lambda>0$ such that
$$\forall \lambda>\widetilde\lambda:
%\ \ \ \exists \gamma_{*}\in \Gamma_{\lambda}^{T} \quad\textrm{such that }
\quad 0<\max_{t\in[0,1]} J_{\lambda,\varepsilon}^{T}(\gamma_{*}(t))<\eta,
\quad  \forall \varepsilon\geq0.
$$
This of course will give the conclusion.

Then let us fix $\eta>0$.
 Let $v\in C^{\infty}_{c}(\mathbb R^{N}), v\geq0$ with $\|v\|=1$ be the same
function fixed in the proof of Lemma \ref{segundageometria1}.
By Lemma \ref{lem:proiect} there exists
$t^{T}_{\lambda}=t^{T}_{\lambda}(v)>0$ verifying
$J_{\lambda,\varepsilon}^{T}(t^{T}_{\lambda}v)=\max_{t\geq
0}J_{\lambda,\varepsilon}^{T}(tv)$
and  $\lim_{\lambda\to+\infty}\sup_{\varepsilon\geq0}t^{T}_{\lambda}=0.$

Hence, due to the continuity of the maps $h_{T}$ and $I_{\varepsilon}$ we get
the uniform limits in $\varepsilon\geq0$:
%$\in \textcolor{red}{[0,\varepsilon^{*}]}$, for some $\varepsilon^{*}>0$:
$$
\lim_{\lambda\rightarrow+
\infty}h_{T}(t^{T}_{\lambda}v)=1, \qquad \lim_{\lambda\to+\infty} I_{\varepsilon}(t^{T}_{\lambda} v)=0.
$$
Then there exists $\widetilde\lambda>0$ such that $$\forall \lambda>\widetilde\lambda: 
\quad \frac{1}{2}(t^{T}_{\lambda})^{2}+h_{T}(t^{T}_{\lambda}v)I_{\varepsilon}(t^{T}_{\lambda} v)< \eta, \quad \forall \varepsilon\geq0.$$
Since, as we know,   for $\lambda>\widetilde\lambda$ it is (see \eqref{eq:gammastar})
 $\gamma_{*} \in  \cap_{\varepsilon\geq0}\Gamma_{\lambda,\varepsilon}^{T}$,
 we get the following estimate:
%where $v$ is the same function fixed in Lemma \ref{segundageometria1}:
\begin{eqnarray*}\label{eq:clambda}
0<
%c_{\lambda}\leq
 \max_{t \in [0,1]}J_{\lambda,\varepsilon}^{T}(\gamma_{*}(t))
 &=&J_{\lambda,\varepsilon}^{T}(t^{T}_{\lambda}v)  \\ 
&\leq&
\dis\frac{1}{2}(t^{T}_{\lambda})^{2} + h_{T}(t^{T}_{\lambda}v)\biggl[\frac 14 \displaystyle\int_{\mathbb{R}^{3}}|\nabla \phi_{\varepsilon}(t^{T}_{\lambda}v)|^{2}  + \frac{ 3\varepsilon^{4}}{8} \int_{\mathbb{R}^{3}}|\nabla \phi_{\varepsilon}(t^{T}_{\lambda}v)|^{4} \biggl] \\
&=& \dis\frac{1}{2}(t^{T}_{\lambda})^{2} + h_{T}(t^{T}_{\lambda}v)I_{\varepsilon}(t^{T}_{\lambda}v)\\
&<&\eta
\end{eqnarray*}
%In this way,
%$$
%\displaystyle\lim_{\lambda\rightarrow
%\infty}h_{T}(t_{\lambda}v)=1.
%$$
%Since $I(t_{\lambda_{n}}v)\to 0$, estimate \eqref{eq:clambda} gives
%Now, Lemma \ref{solutionoperator}, we have 
%$$
%\lim_{\lambda\rightarrow
%\infty}\biggl[\frac 14 \displaystyle\int_{\mathbb{R}^{N}}|\nabla \phi_{t_{\lambda}v}|^{2} dx + \frac 38 \displaystyle\int_{\mathbb{R}^{N}}|
%\nabla \phi_{t_{\lambda}v}|^{4} dx\biggl]=0
%$$
%which leads to
%$$
%\lim_{\lambda\rightarrow \infty}c^{T}_{\lambda}=0
%$$
concluding the proof.
\end{proof}
We remark explicitly  the important fact that the limit in Lemma \ref{nivel}
is uniform in $\varepsilon\geq0$.

Thanks to the previous Lemma we have the following important result.
Recall that $\theta\in (4,2^{*})$ is the constant given in the Ambrosetti-Rabinowitz condition
\eqref{f_{3}}.
\begin{lemma}\label{limitacao}
Let $T>0$ be fixed and
let $\lambda$ sufficiently large, let us say  $\lambda\geq\lambda(T),$  such that 
$$\sup_{\varepsilon\geq0} c^{T}_{\lambda,\varepsilon}< \frac{\theta-2}{2\theta}T^{2},
$$
(this is possible, in view of the previous Lemma \ref{nivel}). 

Then, given $\varepsilon\geq0$, any $(PS)$ sequence $\{u_{n}\}$  (we do not write the dependence on $T,\lambda,\varepsilon$)
at level  $c^{T}_{\lambda,\varepsilon}$ for the functional $J_{\lambda,\varepsilon}^{T}$ is bounded;
more precisely it satisfies, up to subsequence,
%and  $\{u_{n}\} \subset H^{1}(\mathbb{R}^{N})$  be a sequence such that
%$$
%J_{\lambda}^{T}(u_{n}) \to c_{\lambda} \quad \mbox{and}\quad
%(J_{\lambda}^{T})'(u_{n}) \to 0.
%$$
%Then (up to subsequence) it is %$\{u_{n}\}$ is bounded. Moreover, 
$\|u_{n}\|_{H^{1}}\leq T$.
In particular, for all $\lambda\geq\lambda(T), \varepsilon\geq0$,
the sequence $\{u_{n}\}$ is such that
$$J_{\lambda,\varepsilon}(u_{n}) \to c_{\lambda,\varepsilon},\quad J_{\lambda,\varepsilon}'(u_{n})\to0,$$
that is, it is a $(PS)$ sequence
at the Mountain Pass level $c_{\lambda,\varepsilon}$ for the untruncated functional $J_{\lambda,\varepsilon}$.
\end{lemma}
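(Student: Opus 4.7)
The plan is to apply the Ambrosetti--Rabinowitz trick, combining $J_{\lambda,\varepsilon}^{T}(u_{n})$ with $\tfrac{1}{\theta}(J_{\lambda,\varepsilon}^{T})'(u_{n})[u_{n}]$, in such a way that \emph{all} contributions coming from the truncation and from the nonlocal term $I_{\varepsilon}$ are shown to carry the favourable sign once $\theta>4$. The sharp quantitative threshold $\frac{\theta-2}{2\theta}T^{2}$ in the hypothesis will then be used to upgrade mere boundedness to the stronger estimate $\|u_{n}\|_{H^{1}}\le T$, after which the truncation becomes invisible.

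Using \eqref{eq:derivada} and the identity $\int_{\mathbb{R}^{3}}\phi_{\varepsilon}(u)u^{2}=\int|\nabla\phi_{\varepsilon}(u)|^{2}+\varepsilon^{4}\int|\nabla\phi_{\varepsilon}(u)|^{4}$ from \eqref{eq:sostituicao}, together with the definition of $I_{\varepsilon}$, a direct algebraic manipulation gives
\begin{align*}
J_{\lambda,\varepsilon}^{T}(u_{n})-\tfrac{1}{\theta}(J_{\lambda,\varepsilon}^{T})'(u_{n})[u_{n}]
&=\Bigl(\tfrac{1}{2}-\tfrac{1}{\theta}\Bigr)\|u_{n}\|_{H^{1}}^{2}
-\tfrac{2}{\theta T^{2}}\psi'\!\Bigl(\tfrac{\|u_{n}\|_{H^{1}}^{2}}{T^{2}}\Bigr)\|u_{n}\|_{H^{1}}^{2}I_{\varepsilon}(u_{n})\\
&\quad+h_{T}(u_{n})\Bigl[\Bigl(\tfrac{1}{4}-\tfrac{1}{\theta}\Bigr)\!\int|\nabla\phi_{\varepsilon}(u_{n})|^{2}+\Bigl(\tfrac{3}{8}-\tfrac{1}{\theta}\Bigr)\varepsilon^{4}\!\int|\nabla\phi_{\varepsilon}(u_{n})|^{4}\Bigr]\\
&\quad+\lambda\!\int\!\Bigl[\tfrac{1}{\theta}f(x,u_{n})u_{n}-F(x,u_{n})\Bigr]+\Bigl(\tfrac{1}{\theta}-\tfrac{1}{2^{*}}\Bigr)\!\int|u_{n}|^{2^{*}}.
\end{align*}
Each of the last five terms is nonnegative: the $\phi_{\varepsilon}$-bracket because $\theta>4$ forces $\tfrac{1}{4}-\tfrac{1}{\theta}>0$ and $\tfrac{3}{8}-\tfrac{1}{\theta}>\tfrac{1}{8}$; the $\psi'$-term because $\psi'\le 0$ on $[1,2]$ (and vanishes elsewhere) while $I_{\varepsilon}(u_{n})\ge 0$; the $f$-integral by the Ambrosetti--Rabinowitz condition \eqref{f_{3}}; and the critical integral because $\theta<2^{*}$.

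Dropping those nonnegative terms, and using $J_{\lambda,\varepsilon}^{T}(u_{n})\to c_{\lambda,\varepsilon}^{T}$ together with $|(J_{\lambda,\varepsilon}^{T})'(u_{n})[u_{n}]|\le o(1)\|u_{n}\|_{H^{1}}$, I obtain
\[
\tfrac{\theta-2}{2\theta}\|u_{n}\|_{H^{1}}^{2}\le c_{\lambda,\varepsilon}^{T}+o(1)+o(1)\|u_{n}\|_{H^{1}},
\]
which first yields boundedness and then, after taking $\limsup$ and invoking the assumption $c_{\lambda,\varepsilon}^{T}<\tfrac{\theta-2}{2\theta}T^{2}$, gives $\limsup_{n}\|u_{n}\|_{H^{1}}^{2}<T^{2}$. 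Extracting a subsequence, one may assume $\|u_{n}\|_{H^{1}}\le T$ for every $n$.

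Finally, since $\|u_{n}\|_{H^{1}}^{2}/T^{2}\le 1$ and $\psi\equiv 1$ on $[0,1]$, we have $h_{T}(u_{n})=1$ and $\psi'(\|u_{n}\|_{H^{1}}^{2}/T^{2})=0$; a glance at the definition of $J_{\lambda,\varepsilon}^{T}$ and at \eqref{eq:derivada} shows that both $J_{\lambda,\varepsilon}^{T}(u_{n})$ and $(J_{\lambda,\varepsilon}^{T})'(u_{n})$ collapse exactly to $J_{\lambda,\varepsilon}(u_{n})$ and $J_{\lambda,\varepsilon}'(u_{n})$, so $\{u_{n}\}$ is a (PS) sequence for the untruncated functional at the claimed level. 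The delicate point of the argument is precisely the quantitative refinement from mere boundedness to $\|u_{n}\|_{H^{1}}\le T$: it rests entirely on the sharp smallness hypothesis on $c_{\lambda,\varepsilon}^{T}$, and it is this refinement that makes the truncation ultimately disappear at the level of the (PS) sequences one actually needs to analyse.
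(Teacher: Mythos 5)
Your proof is correct and rests on exactly the same ingredients as the paper's own argument: the Ambrosetti--Rabinowitz combination $J_{\lambda,\varepsilon}^{T}(u_{n})-\tfrac{1}{\theta}(J_{\lambda,\varepsilon}^{T})'(u_{n})[u_{n}]$, the identity \eqref{eq:sostituicao} which turns $I_{\varepsilon}(u_{n})-\tfrac{1}{\theta}\int_{\mathbb R^{3}}\phi_{\varepsilon}(u_{n})u_{n}^{2}$ into a combination with positive coefficients (this is where $\theta>4$ enters), the sign condition $\psi'\le 0$, the condition \eqref{f_{3}}, and the level bound $c^{T}_{\lambda,\varepsilon}<\tfrac{\theta-2}{2\theta}T^{2}$. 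The difference is purely in the logical packaging: the paper argues by two successive contradictions, first excluding $\|u_{n}\|_{H^{1}}^{2}>2T^{2}$ (regime where the truncation terms vanish, giving $c^{T}_{\lambda,\varepsilon}\ge\tfrac{\theta-2}{\theta}T^{2}$) and then excluding $T^{2}<\|u_{n}\|_{H^{1}}^{2}\le 2T^{2}$ (using $\psi(2)=0$), whereas you note that all the dropped terms are nonnegative in \emph{every} regime, so a single uniform estimate $\tfrac{\theta-2}{2\theta}\|u_{n}\|_{H^{1}}^{2}\le c^{T}_{\lambda,\varepsilon}+o(1)+o(1)\|u_{n}\|_{H^{1}}$ yields boundedness and $\limsup_{n}\|u_{n}\|_{H^{1}}^{2}<T^{2}$ in one stroke, with no case analysis. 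Your version is in fact slightly cleaner on one technical point: writing the error as $o(1)\|u_{n}\|_{H^{1}}$ rather than $o_{n}(1)$ correctly accounts for the fact that, before boundedness is established, $(J_{\lambda,\varepsilon}^{T})'(u_{n})[u_{n}]$ is only controlled by $\|(J_{\lambda,\varepsilon}^{T})'(u_{n})\|_{H^{-1}}\|u_{n}\|_{H^{1}}$, a point the paper's ``$+\,o_{n}(1)$'' glosses over. (Like the paper, you pass from the level $c^{T}_{\lambda,\varepsilon}$ to the Mountain Pass level $c_{\lambda,\varepsilon}$ of the untruncated functional without further comment; this identification is asserted, not proved, in both arguments, so it is not a gap relative to the paper.)
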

\begin{proof}
Given $\varepsilon\geq0$,
let us first show that $\{u_{n}\}$ is bounded by $2T^{2}.$
Assume by contradiction that there exists a subsequence of $\{u_{n}\}$, still denoted with $\{u_{n}\}$,
such that $\|u_{n}\|_{H^{1}}^{2}> 2T^{2}$. Taking into account  \eqref{eq:derivada}
and \eqref{f_{3}}, it follows that
\begin{eqnarray*}
%\frac{\theta-2}{2\theta}T^{2}>
c^{T}_{\lambda,\varepsilon} &=&J_{\lambda,\varepsilon}^{T}(u_{n})-
\dis\frac{1}{\theta}(J_{\lambda,\varepsilon}^{T})'(u_{n})[u_{n}] + o_{n}(1)
%\quad\textcolor{red}{(\textrm{jogando fora o termo com } f,F, \int|u_{n}|^{2^{*}})}
\\
&\geq&\frac{\theta-2}{\theta}\|u_{n}\|_{H^{1}}^{2} +\psi\left( \frac{\|u_{n}\|_{H^{1}}^{2}}{T^{2}}\right) 
\Big[I_{\varepsilon}(u_{n})-\frac{1}{\theta}\int_{\mathbb R^{3}} \phi_{\varepsilon}(u_{n})u_{n}^{2}\Big] \\
&& -  \frac{2}{\theta T^{2}}\psi'\left(\frac{\|u_{n}\|_{H^{1}}^{2}}{T^{2}}\right)\|u_{n}\|_{H^{1}}^{2}I_{\varepsilon}(u_{n})+o_{n}(1)\\
%- \frac{2}{\theta T^{2}}\psi'\left(\frac{\|u_{n}\|^{2}}{T^{2}}\right)\|u_{n}\|^{2}I(u_{n}) +\left(\frac{1}{\theta}-\frac{1}{2^{*}}\right) \int |u_{n}|^{2^{*}}\\
&\geq& 
\frac{\theta-2}{\theta} T^{2}+ o_{n}(1)
\end{eqnarray*}
being $\psi'\leq0$. This is a contradiction  and proves that $\|u_{n}\|_{H^{1}}^{2}\leq 2T^{2}$. We can prove now the Lemma.
Assume by contradiction that $T^{2}<\|u_{n}\|_{H^{1}}^{2}\leq 2T^{2}$. We have, with similar computations as before
and using that $\psi$ is decreasing, that
\begin{eqnarray*}\label{eq:}
c_{\lambda,\varepsilon}^{T} &=& J_{\lambda,\varepsilon}^{T}(u_{n}) - \frac{1}{\theta}(J_{\lambda,\varepsilon}^{T})'(u_{n})[u_{n}]+o_{n}(1)\\
&\geq& \frac{\theta-2}{2\theta}T^{2}+\psi(2)\Big[I_{\varepsilon}(u_{n})-\frac{1}{\theta}\int_{\mathbb R^{3}} \phi_{\varepsilon}(u_{n})u_{n}^{2}\Big] +o_{n}(1)\\
&=& \frac{\theta-2}{2\theta}T^{2} +o_{n}(1)
\end{eqnarray*}
which contrasts with  the assumption and conclude the proof.
\end{proof}

\section{Proof of Theorem \ref{teorema1}}\label{sec:final}

% From  we have
%$$
%\displaystyle\lim_{\lambda\rightarrow +\infty}c_{\lambda} =0.
%$$
Here we prove Theorem \ref{teorema1} hence $T$ and $\varepsilon$ have to be considered
fixed. 
From Lemma \ref{nivel}  there exists $\lambda'(T)>0$
(actually which  does not depend  on $\varepsilon$ being the limit in Lemma \ref{nivel}
uniform in $\varepsilon$)
such that
\begin{eqnarray}\label{ref4}
\forall \lambda\geq \lambda'(T) : \  c^{T}_{\lambda,\varepsilon}<\frac{2^{*}-\theta}{2^{*}\theta}S^{3/2}\qquad  \forall \varepsilon\geq0.
\end{eqnarray}
Here, $S$ is the best constant
for the embedding $H^1(\mathbb{R}^{3})\hookrightarrow
L^{2^{*}}(\mathbb{R}^{3})$. 

In the remaining of this Section, the fact that inequality \eqref{ref4}
is independent on $\varepsilon$ will not be used. However it will be important in the final
Section \ref{sec:T3}.

Now, fix $\lambda\geq\max\{\lambda'(T),\lambda(T)\}$ where
$\lambda(T)$ is given in Lemma \ref{limitacao}.
Let us 
show that the  truncated functional $J_{\lambda,\varepsilon}^{T}$ 
admits a critical point with norm less then $T$; then
this will  be a critical point of $J_{\lambda,\varepsilon}$
and hence
a solution of our problem \eqref{eq:P}.

From
Lemmas \ref{geometria1}, \ref{segundageometria1} and \ref{limitacao} there exists a
bounded $(PS)$ sequence $\{u_{n}\} \subset H^{1}(\mathbb{R}^{N})$ 
at level $c^{T}_{\lambda,\varepsilon}$  for the functional $J_{\lambda,\varepsilon}^{T}$.
Since the sequence $\{u_{n}\}$ verifies also $\|u_{n}\|_{H^{1}}\leq T$, then 
it is actually a $(PS)$ sequence for the functional $J_{\lambda,\varepsilon}$ at level
$c_{\lambda,\varepsilon} = c_{\lambda,\varepsilon}^{T}$
and we can assume that 
there exists $u_{\lambda,\varepsilon}^{T}\in H^{1}(\mathbb R^{3})$ such that 
$u_{n}\rightharpoonup u^{T}_{\lambda,\varepsilon}$ in $H^{1}(\mathbb R^{3})$
and $\|u_{\lambda,\varepsilon}^{T}\|_{H^{1}}\leq T$.

\medskip

We show now the following 

\medskip

{\bf Claim: }  $\|u_{n}\|_{H^{1}} \to \|u^{T}_{\lambda,\varepsilon}\|_{H^{1}}$ as $n \rightarrow \infty$.

%i.e. $u_{\lambda}$ is a Mountain Pass solution of problem ..... and 
%
%$$
%J_{\lambda}^{T}(u_{n}) \to c_{\lambda} \ \ \mbox{and} \ \
%(J_{\lambda}^{T})'(u_{n}) \to 0.
%$$
%

%\textcolor{red}{(Hence the Theorem should be: for every $T>0$ there exists $\lambda^{*}=\lambda^{*}(T)>0 $
%such that for every $\lambda>\lambda^{*} $ there exists a solution with $\|u_{\lambda}\|\leq T$.
%)}

\bigskip

It order to prove the Claim we
 suppose, up to a subsequence,  that
\begin{equation}\label{eq:Lions}
|\nabla u_n|^2 \rightharpoonup |\nabla u_{\lambda,\varepsilon}^{T}|^2 + \mu~~\text{ and }
\quad|u_n|^{2^{*}} \rightharpoonup |u_{\lambda,\varepsilon}^{T}|^{2^{*}} +
\nu\quad\text{(weak$^*$-sense of measures).}
\end{equation}
Using the Concentration Compactness Principle due to Lions (see
\cite[Lemma 2.1]{Lio2}), we get the existence of a set, at most countable
$\Lambda$, sequences $\{x_i\}_{i\in \Lambda} \subset \mathbb{R}^3$, $\{\mu_i\}_{i\in \Lambda}, \{\nu_i\}_{i\in \Lambda}
\subset [0,\infty)$, such that
\begin{equation}
\nu  =  \sum_{i \in \Lambda}\nu_{i}\delta_{x_{i}},~~\mu\geq \sum_{i
\in \Lambda}\mu_{i}\delta_{x_{i}}~~\text{ and }~~S
\nu_{i}^{2/2^{*}}\leq \mu_{i} \ \  \forall i \in\Lambda,
 \label{lema_infinito_eq11}
\end{equation}
 where $\delta_{x_i}$ is the Dirac mass centered in 
$x_i \in \mathbb R^{3}$.

\medskip

Note that, if it were $\nu_i \geq S^{3/2}$ for
some $i \in \Lambda$, since $\{u_n\}$ is a $(PS)$ sequence for $J_{\lambda,\varepsilon}$
at level $c_{\lambda,\varepsilon}$, we have
\begin{eqnarray*}
c_{\lambda,\varepsilon} &=& J_{\lambda,\varepsilon}(u_n) - \displaystyle\frac{1}{\theta}
J_{\lambda,\varepsilon}'(u_n)[u_n] + o_n(1)\\
&=&\frac{\theta-2}{2\theta}\|u\|_{H^{1}}^{2} 
+\frac{\theta-4}{4\theta} \int_{\mathbb R^{3}}|\nabla \phi_{\varepsilon}(u_{n})|^{2}
+\frac{8-3\theta}{8\theta}\int_{\mathbb R^{3}}|\nabla \phi_{\varepsilon}(u_{n})|^{4}\\
&\ &+\lambda\int_{\mathbb R^{3}}\left(\frac{1}{\theta}f(x,u_{n})u_{n}-F(x,u_{n}) \right)+ \frac{2^{*}-\theta}{2^{*}\theta}
\int_{\mathbb R^{3}}|u_{n}|^{2^{*}} \\
&\geq&\frac{2^{*}-\theta}{2^{*}\theta} \int_{\mathbb R^{3}} |u_{n}|^{2^{*}}\psi_{\rho} 
\end{eqnarray*}
Then, passing to the limit in $n$, 
$$c_{\lambda,\varepsilon} \geq \frac{2^{*}-\theta}{2^{*}\theta}\left( \int_{\mathbb R^{3}} |u|^{2^{*}}+\int_{\mathbb R^{3}} \sum_{i\in \Lambda} \delta_{x_{i}} \psi_{r}\right)
\geq \frac{2^{*}-\theta}{2^{*}\theta} \nu_{i}\geq \frac{2^{*}-\theta}{2^{*}\theta} S^{3/2}$$
%
%&\geq &  \textcolor[rgb]{1,0,0}{\biggl[\frac 14\int_{\mathbb{R}^{N}}|\nabla \phi_{u_{n}}|^{2}-
%\frac 12\int_{\mathbb{R}^{N}}\nabla \phi_{u_{n}}\nabla u_n +\frac 38\int_{\mathbb{R}^{N}}|\nabla \phi_{u_{n}}|^{4}
%-\frac 12\int_{\mathbb{R}^{N}}|\nabla \phi_{u_{n}}|^{2}\nabla \phi_{u_{n}}\nabla u_n \biggl]}\\
%&+&\frac 1N\displaystyle\int_{\mathbb{R}^{N}}|u_{n}|^{2{*}}
%\ dx + o_n(1) \\ 
%&\geq&
%\frac{1}{N}\dis\int_{\mathbb{R}^{N}}
%\psi_{\varrho}|u_n|^{2^{*}} \ dx+ o_n(1).
%\end{eqnarray*}
which is absurd for our choice of  $\lambda$.
 Thus  it has necessarily to be 
 \begin{equation}\label{eq:menor}
 \forall i\in \Lambda: \  \nu_{i}<S^{3/2}.
 \end{equation}

\medskip

%Now we claim that the set $\Lambda$ appearing in \eqref{lema_infinito_eq11} is empty. Arguing by contradiction,
%assume that $\Lambda\neq\emptyset$ and 

On the other hand, fix $i \in \Lambda$. Consider
$\psi \in C_0^{\infty}(\mathbb R^{3},[0,1])$ such that $\psi \equiv 1$ on
$B_1(0)$, $\psi \equiv 0$ on $\mathbb R^{3} \setminus B_2(0)$ and $|\nabla
\psi|_{\infty} \leq 2$. Defining $\psi_{r}(x) :=
\psi((x-x_i)/r)$ where  $r>0$, we have that
$\{\psi_{r}u_n\}$ is in $H^{1}(\mathbb R^{3})$. Since  $\|u_n\|\leq T$, it holds 
$J_{\lambda}'(u_n)[\psi_{r}u_n] \to 0$, explicitely,
\begin{eqnarray}\label{eq:nrho}
\int_{\mathbb R^{3}} u_{n}\nabla u_{n}\nabla \psi_{r}+\int_{\mathbb R^{3}}|\nabla u_{n}|^{2}\psi_{r}
+\int_{\mathbb R^{3}} \phi_{u_{n}} u_{n}^{2} \psi_{r}
 -\lambda\int_{\mathbb R^{3}} f(x,u_{n}) u_{n} \psi_{r} -\int_{\mathbb R^{3}}|u_{n}|^{2^{*}}\psi_{r}=o_{n}(1)
 \end{eqnarray}
Let us pass to the limit, first as $n\to \infty$ and then as $r\to0$, in \eqref{eq:nrho}. We first note that
$$
\Big|\int_{\mathbb{R}^{3}}u_{n}\nabla u_{n}  \nabla
\psi_{r} \Big|\leq \int_{B_{2r}(x_{i})} |\nabla
u_{n}||u_{n} \nabla \psi_{r}|\leq C \left( \int_{B_{2r}(x_{i})}|u_{n}
\nabla \psi_{r}|^{2} \right)^{1/2},
$$
and then
\begin{equation*}\label{eq:}
\limsup_{n\to+\infty} \big| \int_{\mathbb{R}^{3}}u_{n}\nabla u_{n}  \nabla
\psi_{\varrho} \big|\leq C \left( \int_{ B_{2r}(x_{i})} |u \nabla \psi_{r}|^{2} \right)^{1/2} 
\end{equation*}
from which
\begin{equation}\label{eq:serve1}
\lim_{r\to 0}\left( \limsup_{n\to+\infty}  \Big|\int_{\mathbb{R}^{3}}u_{n}\nabla u_{n}  \nabla
\psi_{\varrho}\Big| \right)=0.
\end{equation}
Analogously it is easy to see that
\begin{equation}\label{eq:serve2}
\lim_{r\to 0} \left( \limsup_{n\to +\infty}\int_{\mathbb R^{3}}\phi_{\varepsilon}(u_{n}) u^{2}_{n}\psi_{r} \right) = 
\lim_{r\to 0} \left( \limsup_{n\to +\infty}\int_{\mathbb R^{3}}f(x,u_{n})u_{n}\psi_{r} \right)= 0
\end{equation}
Moreover we have
\begin{eqnarray}\label{eq:}
&\displaystyle\lim_{n\to+\infty}\int_{\mathbb R^{3}}|\nabla u_{n}|^{2}\psi_{r} \geq \int_{\mathbb R^{3}}\psi_{r}d\mu &\\
&\displaystyle\lim_{n\to+\infty} \int_{\mathbb R^{3}} |u_{n}|^{2^{*}}\psi_{r} \geq \int_{\mathbb R^{3}} \psi_{r}d\nu&
\end{eqnarray}
in \eqref{eq:nrho}, taking into account \eqref{eq:Lions}, \eqref{eq:serve1} and \eqref{eq:serve2},
by \eqref{eq:nrho} we deduce
$$
\int_{\mathbb{R}^{3}} \psi_{r}\textrm{d}\nu \geq \int_{\mathbb{R}^{3}}
 \psi_{r}\textrm{d}\mu +o_{r}(1).
$$
%Letting $\varrho \to 0$ and using standard theory of Radon measures,
%we conclude that $\nu_i \geq M_{a}(\alpha_{0}^{2})\mu_i\geq
%m_{0}\mu_i$. 
But then passing to the limit as $r\to0$ we get $\nu_{i}\geq\mu_{i}$ and
by \eqref{lema_infinito_eq11}, we infer  that
\begin{equation*}
\nu_i \geq S^{3/2}.
% \label{lema_finito_eq2}
\end{equation*}
This of course contrasts with  \eqref{eq:menor} and gives that  $\Lambda=\emptyset$.

As a consequence of this,
 $u_n \to u_{\lambda,\varepsilon}^{T}$ in $L^{2^{*}}(\mathbb{R}^{3})$, from which we deduce in a standard
 way that  that $\|u_{n}\|_{H^{1}} \to \|u_{\lambda,\varepsilon}^{T}\|_{H^{1}}$, proving the Claim.

Then $u_{n} \to u^{T}_{\lambda,\varepsilon}$ in $H^1(\mathbb{R}^{3})$ and hence 
 since the functional $J_{\lambda,\varepsilon}$ is $C^{1}$ and $\|u_{n}\|_{H^{1}}\leq T$:
$$
J_{\lambda,\varepsilon}(u_{n})=J_{\lambda,\varepsilon}^{T}(u_{n})\rightarrow J_{\lambda,\varepsilon}^{T}(u^{T}_{\lambda,\varepsilon})=c_{\lambda,\varepsilon}^{T}=c_{\lambda,\varepsilon} \ \ \mbox{and} \ \
J_{\lambda,\varepsilon}'(u_{n})=(J_{\lambda,\varepsilon}^{T})'(u_{n})\rightarrow (J_{\lambda,\varepsilon}^{T})'(u^{T}_{\lambda})=0
$$
i.e.$$
J_{\lambda,\varepsilon}(u^{T}_{\lambda,\varepsilon})=c_{\lambda,\varepsilon}>0 \ \ \mbox{and} \ \
J_{\lambda,\varepsilon}'(u^{T}_{\lambda,\varepsilon})=0,
$$
showing that $u_{\lambda,\varepsilon}^{T}$ is the solution of \eqref{eq:equation} we were looking for. 

The first part of Theorem \ref{teorema1} is proved, with 
$$\lambda^{*}:=\max\{\lambda'(T),\lambda(T)\}, \quad u_{\lambda,\varepsilon}:=u_{\lambda,\varepsilon}^{T},
\quad \phi_{\lambda,\varepsilon} := \Phi_{\varepsilon}(u_{\lambda,\varepsilon})=\phi_{\varepsilon}(u_{\lambda,\varepsilon}).$$

For what concerns the positivity of the solutions, we observe that for every $u\in H^{1}(\mathbb R^{3})$,
the solution 
$\phi_{\varepsilon}(u)$ of the second equation in \eqref{eq:P} is nonnegative;
indeed this is easily seen by multiplying the second equation by $\phi_{\varepsilon}(u)^{-}:=\max\{-\phi_{\varepsilon}(u) , 0\}$ and integrating:
we arrive at
$$\int_{\mathbb R^{3}}|\nabla \phi_{\varepsilon}(u)^{-}|^{2}+\int_{\mathbb R^{3}}|\nabla \phi_{\varepsilon}(u)^{-}|^{4} \leq0$$
and the conclusion follows. Then, having $\phi_{\varepsilon}(u)\geq0$
we see analogously that the solution $u_{\lambda,\varepsilon}$  of
$$-\Delta u +u+\phi_{\varepsilon}(u) u = \lambda f(x,u) + |u|^{2^{*}-2}u$$
found above has to be nonnegative, being $f(x,t)=0$ for $t\leq0$.

Finally by similar computations as in the proof of Lemma
\ref{limitacao} we have that, fixed $\varepsilon>0$:
\begin{eqnarray*}\label{eq:}
0=\lim_{\lambda\to+\infty}  c_{\lambda,\varepsilon}&=& J_{\lambda,\varepsilon}(u_{\lambda,\varepsilon}) -\frac{1}{\theta}J_{\lambda,\varepsilon}'(u_{\lambda,\varepsilon})[u_{\lambda,\varepsilon}]\\
&\geq& \frac{\theta-2}{\theta}\|u_{\lambda,\varepsilon}\|_{H^{1}}^{2}.
\end{eqnarray*}
Then $\lim_{\lambda\to+\infty}u_{\lambda,\varepsilon}=0$ in $H^{1}(\mathbb R^{3})$ and by the continuity of the map $\Phi_{\varepsilon}$
defined in Lemma  \ref{lem:1}, we get also $\lim_{\lambda\to+\infty}\|\phi_{\lambda,\varepsilon}\|_{X}=0$.
As we have already said, the fact that $\lim_{\lambda\to+\infty}|\phi_{\lambda,\varepsilon}|_{\infty}=0$
follows by the continuous embedding of the space $X$ into $L^{\infty}(\mathbb R^{3})$.

Theorem \ref{teorema1} is completely proved.

\begin{remark}
Actually we have proved an additional property on the solution of \eqref{eq:equation}.
Indeed our method shows that, for every $T>0$ there exists a $\lambda^{*}=\lambda^{*}(T)$ such that
for all $\lambda>\lambda^{*}(T)$ and $\varepsilon\geq0$ there exists a solution $u_{\lambda,\varepsilon}$
of \eqref{eq:equation} with has norm less then $T$.
\end{remark}

\section{Proof of Theorem \ref{teorema2}}\label{sec:T2}
%Actually all that we have proved in the previous sections
%is independent on $\varepsilon>0$
%then we have proved the following result where we now write explicitly the 
%dependence on $\varepsilon$.
%\begin{theorem}% \label{teorema1}
%Assume that conditions \eqref{f_{0}}-\eqref{f_{3}} hold.
%Then, there exists $\lambda^{*}>0$, such that
%\textcolor{red}{for all $\varepsilon>0$}
%  problem
%\eqref{eq:Plambda}  has a  solution $(u_{\lambda,\varepsilon}, \phi_{\lambda,\varepsilon})\in H^{1}(\mathbb R^{3})\times X$
%with $\phi_{\lambda,\varepsilon}, u_{\lambda,\varepsilon}>0$ 
%for all $\lambda\geq
%\lambda^{*}$. Moreover,  if $(u_{\lambda},\phi_{\lambda})$ is such a solution, then
%\begin{eqnarray*}\label{eq:}
%&&\forall \varepsilon>0: \ \lim_{\lambda\rightarrow + \infty} \left( \int_{\mathbb R^{3}} |\nabla u_{\lambda,\varepsilon} |^{2}+ \int_{\mathbb R^{3}}u_{\lambda,\varepsilon}^{2}\right)= 0\\
%&&\forall \varepsilon>0: \ \lim_{\lambda\rightarrow + \infty}\left( \int_{\mathbb R^{3}}| \nabla \phi_{\lambda,\varepsilon} |^{2} +\int_{\mathbb R^{3}} |\nabla \phi_{\lambda,\varepsilon}|^{4}\right)=0\, \quad \text{ and } \  \lim_{\lambda\to+\infty} \phi_{\lambda,\varepsilon}=0 \ \text{ uniformly.}
%\end{eqnarray*}
%\end{theorem}
From now on we fix  the parameter $\overline\lambda$ greater then $\lambda^{*}=\max\{\lambda'(T),\lambda(T)\}$.
Our aim now is to show the behaviour of the solutions $u_{\overline\lambda,\varepsilon}$
with respect to $\varepsilon$.

%We write a subscript $\varepsilon$ in the energy functional, to recall that we are now
%doing an analysis with respect to  $\varepsilon$, and not on $\overline\lambda$ which is fixed.
%Then let
%\begin{equation*}\label{eq:Iepsilon}
%J_{\overline\lambda,\varepsilon}(u)=
%\frac 12 \|u\|^{2}+ \frac 14 \displaystyle\int_{\mathbb{R}^{3}}|\nabla \phi_u|^{2}  + \frac {3\varepsilon^{4}}{8} \displaystyle\int_{\mathbb{R}^{3}}|\nabla \phi_u|^{4}  
%-\overline\lambda\displaystyle\int_{\mathbb{R}^{3}}F(x,u) - \frac{1}{2^{*}}\displaystyle\int_{\mathbb{R}^{3}}|u|^{2^{*}}
%\end{equation*}
%and the solution $u_{\overline\lambda,\varepsilon}$ is of course a critical point of $J_{\overline\lambda,\varepsilon}$, and clearly
%also  of
%$$J_{\overline\lambda,\varepsilon}^{T} = \frac12\|u\|^{2}_{H^{1}} +h_{T}(u) I_{\varepsilon}(u) - \overline\lambda \int_{\mathbb R^{3}}
%F(x,u)-\frac{1}{2^{*}} \int_{\mathbb R^{3}}|u|^{2^{*}}$$
%for every $T>0$. 
%

Let us begin to show that $\{u_{\overline \lambda,\varepsilon}\}_{\varepsilon\geq0}$ is bounded.
We know that
\begin{equation}\label{eq:fato}
J_{\overline\lambda,\varepsilon}(u_{\overline \lambda,\varepsilon}) = c_{\overline \lambda,\varepsilon}\,,
\quad \ \ J_{\overline\lambda,\varepsilon}'(u_{\overline \lambda, \varepsilon})=0.
\end{equation}
Moreover, for this fixed $\overline\lambda>0$ we can invoke \eqref{ref4} and obtain that
\begin{equation}\label{eq:limita}
\forall \varepsilon>0:  \quad 0<c_{\overline\lambda,\varepsilon} \leq \frac{2^{*} - \theta}{2^{*}\theta}{S^{3/2}}.
\end{equation}

Then if we assume that $\lim_{\varepsilon\to0^{+}} \|u_{\overline\lambda,\varepsilon}\|_{H^{1}}=+\infty$,
exactly as in the proof of Lemma \ref{limitacao} (where we can replace $J_{\lambda,\varepsilon}^{T}$
with $J_{\overline\lambda,\varepsilon}$), by \eqref{eq:fato} we have:
\begin{equation*}
c_{\overline\lambda,\varepsilon} =J_{\overline\lambda,\varepsilon}(u_{\overline\lambda,\varepsilon})-
\dis\frac{1}{\theta}(J_{\overline\lambda,\varepsilon})'(u_{\overline\lambda,\varepsilon})[u_{\overline\lambda,\varepsilon}] + o_{\varepsilon}(1)
\geq\frac{\theta-2}{2\theta}\|u_{\overline\lambda,\varepsilon}\|_{H^{1}}^{2}  +o_{\varepsilon}(1)
\end{equation*}
which contrasts with \eqref{eq:limita}.
Then,  there exists  $u_{\overline\lambda,0}\in H^{1}(\mathbb R^{3})$ such that  up to subsequence,
$$u_{\overline\lambda, \varepsilon} \rightharpoonup u_{\overline\lambda,0}\quad \text{in  $H^{1}(\mathbb R^{3})$ \  as $\varepsilon\to0^{+}$}.$$
The fact that this convergence is strong, is done exactly in a straightforward way as in 
Section \ref{sec:final}.
This is based on the fact that the inequality in  \eqref{ref4} is true for every  $\varepsilon$.
Then the proof follows as before by replacing the limits in $n$ with limits with respect to $\varepsilon$.
In this way we first obtain  the strong convergence into $L^{2^{*}}(\mathbb R^{3})$,
and then
\begin{equation}\label{eq:strongconv}
\lim_{\varepsilon\to 0^{+}} u_{\overline\lambda,\varepsilon} = u_{\overline{\lambda},0}\quad \text{ in } H^{1}(\mathbb R^{3}).
\end{equation}
In particular we have that $u_{\overline\lambda,\varepsilon}^{2}\to u_{\overline\lambda, 0}^{2}$ in $L^{6/5}(\mathbb R^{3})$.

At this point we recall the following result  which we rewrite adapted to our notations.
\begin{lemma}(See  \cite[Lemma 3.2]{BK})\label{lem:kavian}
Let $f\in L^{6/5}(\mathbb R^{3}),  \{f_{\varepsilon}\}_{\varepsilon>0}\subset L^{6/5}(\mathbb R^{3})$ and
 assume
$\lim_{\varepsilon\to0^{+}}f_{\varepsilon}= f$ in $L^{6/5}(\mathbb R^{3})$.
Then
\begin{eqnarray*}%\label{eq:}
&\displaystyle\lim_{\varepsilon\to0^{+}}\phi_{\varepsilon}(f_{\varepsilon}) = \phi_{0}(f) \quad\text{in } D^{1,2}(\mathbb R^{3}),\\
&\displaystyle\lim_{\varepsilon\to 0^{+}}\varepsilon \phi_{\varepsilon}(f_{\varepsilon}) = 0\quad \text{ in } D^{1,4}(\mathbb R^{3}).
\end{eqnarray*}

\end{lemma}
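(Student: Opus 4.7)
The plan is to obtain a priori bounds from the energy identity, extract weak limits, identify the limit by passing to the limit in the weak formulation with smooth test functions, and finally upgrade weak to strong convergence via a norm-convergence argument.

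\medskip

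\textbf{Step 1 (a priori bounds).} I would test the equation
$$-\Delta \phi_{\varepsilon}(f_{\varepsilon})-\varepsilon^{4}\Delta_{4}\phi_{\varepsilon}(f_{\varepsilon})=f_{\varepsilon}$$
against $\phi_{\varepsilon}(f_{\varepsilon})$ itself to obtain
$$\int_{\mathbb R^{3}}|\nabla\phi_{\varepsilon}(f_{\varepsilon})|^{2}+\varepsilon^{4}\int_{\mathbb R^{3}}|\nabla\phi_{\varepsilon}(f_{\varepsilon})|^{4}=\int_{\mathbb R^{3}}f_{\varepsilon}\,\phi_{\varepsilon}(f_{\varepsilon}).$$
Using H\"older and the Sobolev embedding $D^{1,2}(\mathbb R^{3})\hookrightarrow L^{6}(\mathbb R^{3})$, the right-hand side is bounded by $C|f_{\varepsilon}|_{6/5}|\nabla\phi_{\varepsilon}(f_{\varepsilon})|_{2}$. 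This yields
$$|\nabla\phi_{\varepsilon}(f_{\varepsilon})|_{2}\leq C|f_{\varepsilon}|_{6/5},\qquad \varepsilon^{4}|\nabla\phi_{\varepsilon}(f_{\varepsilon})|_{4}^{4}\leq C|f_{\varepsilon}|_{6/5}^{2},$$
so both quantities stay bounded along $\varepsilon\to 0^{+}$, since $f_\varepsilon\to f$ in $L^{6/5}$. In particular $\varepsilon|\nabla\phi_{\varepsilon}(f_{\varepsilon})|_{4}$ is bounded.

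\medskip

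\textbf{Step 2 (identification of the weak limit).} By the first bound, a subsequence of $\{\phi_{\varepsilon}(f_{\varepsilon})\}$ converges weakly in $D^{1,2}(\mathbb R^{3})$ to some $\psi$. For any test function $w\in C^{\infty}_{c}(\mathbb R^{3})$ (which lies in $X$) the weak formulation reads
$$\int_{\mathbb R^{3}}\nabla\phi_{\varepsilon}(f_{\varepsilon})\nabla w+\varepsilon^{4}\int_{\mathbb R^{3}}|\nabla\phi_{\varepsilon}(f_{\varepsilon})|^{2}\nabla\phi_{\varepsilon}(f_{\varepsilon})\nabla w=\int_{\mathbb R^{3}}f_{\varepsilon}w.$$
The first term passes to $\int\nabla\psi\nabla w$ by weak convergence; the third converges to $\int fw$ by the $L^{6/5}$ convergence of $f_{\varepsilon}$. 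The quasilinear term is estimated by
$$\Bigl|\varepsilon^{4}\int|\nabla\phi_{\varepsilon}(f_{\varepsilon})|^{2}\nabla\phi_{\varepsilon}(f_{\varepsilon})\nabla w\Bigr|\leq\varepsilon\bigl(\varepsilon|\nabla\phi_{\varepsilon}(f_{\varepsilon})|_{4}\bigr)^{3}|\nabla w|_{4}=O(\varepsilon),$$
so it vanishes. We conclude that $-\Delta\psi=f$ in $\mathcal D'(\mathbb R^{3})$, hence $\psi=\phi_{0}(f)$ by uniqueness, and the limit is independent of the subsequence.

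\medskip

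\textbf{Step 3 (strong convergence).} To upgrade to strong convergence in $D^{1,2}(\mathbb R^{3})$, I pass to the limit in the energy identity of Step 1. Since $f_{\varepsilon}\to f$ in $L^{6/5}$ and $\phi_{\varepsilon}(f_{\varepsilon})\rightharpoonup\phi_{0}(f)$ in $L^{6}$, the right-hand side converges to $\int f\phi_{0}(f)=\int|\nabla\phi_{0}(f)|^{2}$. By weak lower semicontinuity,
$$\int|\nabla\phi_{0}(f)|^{2}\leq\liminf_{\varepsilon\to0^{+}}\int|\nabla\phi_{\varepsilon}(f_{\varepsilon})|^{2}\leq\limsup_{\varepsilon\to0^{+}}\Bigl[\int|\nabla\phi_{\varepsilon}(f_{\varepsilon})|^{2}+\varepsilon^{4}\int|\nabla\phi_{\varepsilon}(f_{\varepsilon})|^{4}\Bigr]=\int|\nabla\phi_{0}(f)|^{2}.$$
Hence $|\nabla\phi_{\varepsilon}(f_{\varepsilon})|_{2}\to|\nabla\phi_{0}(f)|_{2}$, which, combined with weak convergence in the Hilbert space $D^{1,2}(\mathbb R^{3})$, forces the first strong convergence. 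Simultaneously $\varepsilon^{4}\int|\nabla\phi_{\varepsilon}(f_{\varepsilon})|^{4}\to 0$, which is exactly $|\nabla(\varepsilon\phi_{\varepsilon}(f_{\varepsilon}))|_{4}\to 0$, giving $\varepsilon\phi_{\varepsilon}(f_{\varepsilon})\to 0$ in $D^{1,4}(\mathbb R^{3})$.

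\medskip

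The main obstacle I anticipate is the passage to the limit in the quasilinear term of the weak formulation if one tried to use a general test function in $X$; choosing $C^{\infty}_{c}$ test functions (whose $|\nabla w|_{4}$ is finite) and exploiting the precise scaling $\varepsilon^{4}|\nabla\phi_{\varepsilon}|_{4}^{3}=\varepsilon(\varepsilon|\nabla\phi_{\varepsilon}|_{4})^{3}$ is what makes the argument clean; no monotonicity trick is needed because one only has to identify the limit, not to pass to the limit in a nonlinear operator applied to a weakly convergent sequence.
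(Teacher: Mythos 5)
Your proof is correct, and it is worth pointing out that the paper itself offers no proof of this lemma at all: it is quoted verbatim from Benmlih--Kavian \cite[Lemma 3.2]{BK}, so your argument is not competing with an in-paper proof but replacing a citation with a self-contained one. Your three steps are sound: the energy identity gives $|\nabla \phi_{\varepsilon}(f_{\varepsilon})|_{2}\leq C|f_{\varepsilon}|_{6/5}$ and the bound on $\varepsilon|\nabla\phi_{\varepsilon}(f_{\varepsilon})|_{4}$; the scaling observation $\varepsilon^{4}|\nabla\phi_{\varepsilon}|_{4}^{3}=\varepsilon(\varepsilon|\nabla\phi_{\varepsilon}|_{4})^{3}=O(\varepsilon)$ kills the quasilinear term against $C^{\infty}_{c}$ test functions, so the weak limit solves the Poisson equation and is identified by uniqueness in $D^{1,2}(\mathbb R^{3})$; and the energy identity plus weak lower semicontinuity simultaneously forces norm convergence of the $L^{2}$ gradient (hence strong $D^{1,2}$ convergence, by weak convergence in a Hilbert space) and the vanishing of $\varepsilon^{4}\int|\nabla\phi_{\varepsilon}(f_{\varepsilon})|^{4}=|\nabla(\varepsilon\phi_{\varepsilon}(f_{\varepsilon}))|_{4}^{4}$, which is exactly the second assertion. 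Your closing remark is also the right diagnosis: since the quasilinear term disappears in the limit, no Minty--Browder monotonicity argument is needed to identify a limit of the nonlinear operator. Two small points you should make explicit if this were written out in full: the convergence of the right-hand side $\int f_{\varepsilon}\phi_{\varepsilon}(f_{\varepsilon})\to\int f\,\phi_{0}(f)$ uses the splitting $\int(f_{\varepsilon}-f)\phi_{\varepsilon}(f_{\varepsilon})+\int f\,(\phi_{\varepsilon}(f_{\varepsilon})-\phi_{0}(f))$ (strong times bounded, plus weak convergence in $L^{6}$, which follows from weak $D^{1,2}$ convergence and the continuous embedding), and the passage from subsequences to the full family $\varepsilon\to 0^{+}$ uses that every subsequence admits a further subsequence with the same identified limit.
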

In our case we have then
\begin{equation}\label{eq:p}
\phi_{\varepsilon}(u_{\overline\lambda,\varepsilon}) \to \phi_{0}(u_{\overline\lambda,0}) \quad \text{ in } D^{1,2}(\mathbb R^{3}), \quad 
\varepsilon \phi_{\varepsilon}(u_{\overline\lambda,\varepsilon}) \to 0\quad \text{ in } D^{1,4}(\mathbb R^{3}).
\end{equation}
To conclude the proof of Theorem \ref{teorema2}, let $v\in C^{\infty}_{c}(\mathbb R^{3})$ with $\textrm{supp}(v)\subset K$.
 We know that
\begin{equation}\label{eq:final1}
\langle u_{\overline\lambda,\varepsilon}, v\rangle_{H^{1}} +
%\int_{\mathbb R^{3}}\nabla u_{\overline\lambda,\varepsilon}\nabla v+\int_{\mathbb R}u_{\overline\lambda,\varepsilon} + 
\int_{K}\phi_{\varepsilon}(u_{\overline\lambda,\varepsilon})u_{\overline\lambda,\varepsilon} v=\int_{K}\overline\lambda f(x,u_{\overline\lambda,\varepsilon})v -\int_{K} |u_{\overline\lambda,\varepsilon}|^{2^{*}-2}u_{\overline\lambda,\varepsilon}v.
\end{equation}
We want to pass to the limit as $\varepsilon\to 0^{+}$ in the above identity. Let us see every term.

Of course
\begin{equation}\label{eq:final2}
\langle u_{\overline\lambda,\varepsilon}, v\rangle_{H^{1}} \to \langle u_{\overline\lambda,0}, v\rangle_{H^{1}}.
\end{equation}
Since $\phi_{\varepsilon}(u_{\overline\lambda,\varepsilon})\to \phi_{0}(u_{\overline\lambda,0})$ in $L^{6}(\mathbb R^{3}), u_{\overline\lambda,\varepsilon}\to u_{\overline \lambda,0}$ in $L^{12/5}(K)$ and $v\in L^{12/5}(K)$
we easily find
\begin{equation}\label{eq:final3}
\int_{K}\phi_{\varepsilon}(u_{\overline\lambda,\varepsilon})u_{\overline\lambda,\varepsilon} v
\to\int_{K} \phi_{0}(u_{\overline\lambda,0})u_{\overline\lambda,0} v.
\end{equation}
Moreover in a standard way we have also 
\begin{equation}\label{eq:final4}
\int_{K} f(x,u _{\overline\lambda,\varepsilon})v \to \int_{K} f(x,u _{\overline\lambda,0})v .
\end{equation}
and
\begin{equation}\label{eq:final5}
\int_{K} |u_{\overline\lambda,\varepsilon}|^{2^{*}-2}u_{\overline\lambda,\varepsilon}v\to
\int_{K} |u_{\overline\lambda,0}|^{2^{*}-2}u_{\overline\lambda,0}v.
\end{equation}

By \eqref{eq:final1}-\eqref{eq:final5} we deduce that
\begin{equation}\label{eq:finalissima}
\langle u_{\overline\lambda,0}, v\rangle +
%\int_{\mathbb R^{3}}\nabla u_{\overline\lambda,\varepsilon}\nabla v+\int_{\mathbb R}u_{\overline\lambda,\varepsilon} + 
\int_{K}\phi_{0}(u_{\overline\lambda,0})u_{\overline\lambda,0} v=\int_{K}\overline\lambda f(x,u_{\overline\lambda,0})v -\int_{K} |u_{\overline\lambda,0}|^{2^{*}-2}u_{\overline\lambda,0}v
\end{equation}
and this says that $u_{\overline\lambda,0}$ gives rise to a solution $(u_{\overline\lambda,0},\phi_{0}(u_{\overline\lambda,0}))$
 of the Schr\"odinger-Poisson system \eqref{eq:SP}.
 
 Then by setting 
 $$\phi_{\overline\lambda,\varepsilon}:=\phi_{\varepsilon}(u_{\overline{\lambda},\varepsilon}),\quad \phi_{\overline\lambda,0}:=\phi_{0}(u_{\overline{\lambda},0}),$$
  the proof of Theorem \ref{teorema2} follows by \eqref{eq:strongconv}, \eqref{eq:p} and \eqref{eq:finalissima}.

\begin{remark}
As a byproduct we get 
$I_{\varepsilon}(u_{\varepsilon})\to I_{0}(u_{0}) = \frac{1}{4}\int_{\mathbb R^{3}}|\nabla \phi_{0}(u_{\overline\lambda,0})|^{2}$ and consequently
we have the convergence of the mountain  pass energy levels,  $c_{\overline\lambda,\varepsilon}\to c_{\overline\lambda,0}$.
\end{remark}

\section{proof of Theorem \ref{teorema3}}\label{sec:T3}

In this section we study  the supercritical case, that is the problem
\begin{equation}\label{eq:Plambdap2}
\left\{
\begin{array}[c]{ll}
-\Delta u + u+\phi u = \lambda f(x,u)+|u|^{p-2}u & \ \mbox{in} \ \
\mathbb{R}^{3}, p>2^{*},\medskip\\
 -\Delta \phi - \varepsilon^{4}\Delta_4 \phi =u^{2}& \  \mbox{in} \ \ \mathbb{R}^{3},
\end{array}
 \right.
\end{equation}
under the same assumptions on $f$. 
We already know it is equivalent to consider the equation
\begin{equation}\label{eq:ridotta}
-\Delta u + u +\phi_{\varepsilon}(u) u =\lambda f(x,u)+|u|^{p-2}u  \quad \textrm{in } \mathbb R^{3}, \ \  p>2^{*}.
\end{equation}
To deal with  this case we consider a new nonlinearity  $g_{K}$, for $K>0$, given by
$$
g_{K}(x,t)=
\begin{cases}
 \lambda f(x,t)+|t|^{p-2}t  &\ \ \mbox{ if }\ \ |t|\leq K \\
 \lambda f(x,t)+K^{p-2^{*}}|t|^{2^{*}-2}t &\ \  \mbox{ if }\ \ |t|> K.
\end{cases}
$$
%$$
%f_{K}(x,t)= \lambda f(x,u)+|t|^{p-2}t \ \ if \ \ |t|\leq K
%$$
%and
%$$
%f_{K}(x,t)= \lambda f(x,t)+K^{p-2^{*}}|t|^{2^{*}-2}u \ \ if \ \ |t|\geq K.
%$$
Once that 
$$
|g_{K}(x,t)|\leq \lambda f(x,t)+K^{p-2^{*}}|t|^{2^{*}-2}t \ \ \mbox{ if }\ \ t\in \mathbb R,
$$
we are in a position to apply Theorem \ref{teorema1} to  the equation
\begin{equation}\label{eq:ridottaK}
-\Delta u + u +\phi_{\varepsilon}(u) u =g_{K}(x,u) \quad \textrm{in } \mathbb R^{3},
\end{equation}
and then there exists a solution $u_{\lambda,\varepsilon,K}$ of \eqref{eq:ridottaK}.
%\eqref{eq:ridotta}.
 %truncated associated to problem  \ref{eq:Plambdap}.
  It is sufficient now to show that there exists $C>0$  independent on $\lambda$ and $K$ such that 
  \begin{equation}\label{eq:stimaMoser}
  |u_{\lambda,\varepsilon,K}|_{\infty}\leq C \|u_{\lambda,\varepsilon,K}\|_{H^{1}}.
  \end{equation}
  Indeed, since we know that
   $\|u_{\lambda,\varepsilon,K}\|_{H^{1}}\to 0$ as $\lambda \to \infty$, then, there is a $\lambda^{*}>0$ such that, for all $\lambda\geq \lambda^{*}$, $u_{\lambda,\varepsilon,K}$ is indeed a solution of   \eqref{eq:ridotta}. 
 
 However the proof of \eqref{eq:stimaMoser} can be obtained by repeating the arguments
  in the proof of  \cite[Theorem 1.1, pages 10-13]{FigueiredoPimenta}, taking into account
  the positivity of the solution of $-\Delta \phi -\varepsilon^{4}\Delta_{4} \phi =u^{2}$.

\end{document}